\newtheorem{thm}{Theorem}[section]
\newtheorem*{thm*}{Theorem} 
\newtheorem{prop}[thm]{Proposition}
\newtheorem{lem}[thm]{Lemma}
\newtheorem{cor}[thm]{Corollary}
\newtheorem*{cor*}{Corollary}
\newtheorem{conj}[thm]{Conjecture}
\theoremstyle{definition}
\newtheorem{ex}[thm]{Example}
\newtheorem{defn}[thm]{Definition}
\theoremstyle{remark}
\newtheorem{rem}[thm]{Remark}
\numberwithin{equation}{section}
\title{Cyclic group actions and embedded spheres in 4-manifolds}
\author{M.~J.~D.~Hamilton}
\address{      Institute for Geometry and Topology\\
               University of Stuttgart\\
               Pfaffenwaldring 57\\
               70569 Stuttgart\\
               Germany}
\email{mark.hamilton@math.lmu.de}
\date{\today}
\keywords{4-manifold, group action, fixed point set, $G$-signature theorem}
\subjclass[2010]{Primary 57M60, 57S17, 57N13; Secondary 57R57}
\begin{document}

\begin{abstract}In this note we derive an upper bound on the number of 2-spheres in the fixed point set of a smooth and homologically trivial cyclic group action of prime order on a simply-connected 4-manifold. This improves the a priori bound which is given by one half of the Euler characteristic of the 4-manifold. The result also shows that in some cases the 4-manifold does not admit such actions of a certain order at all or that any such action has to be pseudofree.
\end{abstract}

\maketitle

\section{Introduction}
Actions of finite groups, in particular cyclic groups $\mathbb{Z}_p$ of prime order $p$, on simply-connected 4-manifolds have been studied in numerous places in the literature. An interesting subclass are those actions which act trivially on homology. In the topological setting, Edmonds has shown \cite[Theorem 6.4]{Ed1} that every closed, simply-connected, topological 4-manifold admits for every $p>3$ a (non-trivial) homologically trivial action which is {\em locally linear}. However, it is an open question from the Kirby list if such actions exist in the {\em smooth} setting for 4-manifolds like the $K3$ surface (it is known that there is no such action of $\mathbb{Z}_2$ \cite{Mat, Rub} on $K3$ and no such action of $\mathbb{Z}_p$ which is holomorphic \cite{BuRa, Pe} or symplectic \cite{ChKw}).

The actions in the theorem of Edmonds can be assumed to be pseudofree, i.e.~the fixed point set consists of isolated points. In general, if the action is homologically trivial, the fixed point set will consist of isolated points and disjoint embedded 2-spheres. We recall this fact in Proposition \ref{main prop}. If $m$ is the number of points and $n$ the number of spheres, then $m+2n$ is equal to the Euler characteristic $\chi(M)$ of the 4-manifold. This implies an a priori upper bound on the number of spheres:
\begin{equation*}
n\leq\frac{\chi(M)}{2}.
\end{equation*}
A natural question is whether all cases of possible values for $n$ can occur. We will show that this upper bound can indeed be improved, for example, by a factor of roughly $\frac{1}{2}$ if the 4-manifold $M$ and the action are smooth, $M$ is smoothly minimal and the Seiberg-Witten invariants of $M$ are non-vanishing. More precisely we will show the following: We say that a 4-manifold $M$ satisfies property $(\ast)$ if every smoothly embedded 2-sphere in $M$ that represents a non-zero rational homology class has negative self-intersection. For example, a 4-manifold $M$ with $b_2^+(M)>1$ and non-vanishing Seiberg-Witten invariants satisfies property $(\ast)$. Then we have (cf.~Corollary \ref{main cor to thm}):
\begin{cor*}
Let the group $\mathbb{Z}_p$ act homologically trivially and smoothly on a simply-connected, smooth 4-manifold $M$ that satisfies property $(\ast)$. Then
\begin{equation*}
n\leq\frac{p\chi(M)-c_1^2(M)}{3(p-1)}.
\end{equation*}
If in addition $M$ is smoothly minimal, then
\begin{equation*}
n\leq\frac{p\chi(M)-c_1^2(M)}{2(2p-1)}.
\end{equation*}
Independently of $c_1^2(M)$ we have in these cases the bounds
\begin{equation*}
n<  \frac{\chi(M)}{3}\left(1+\frac{2}{p-1}\right)
\end{equation*}
and
\begin{equation*}
n<  \frac{\chi(M)}{4}\left(1+\frac{3}{2p-1}\right),
\end{equation*}
respectively.
\end{cor*}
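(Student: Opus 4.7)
My proposed approach is to combine Proposition~\ref{main prop} with the Atiyah--Singer $G$-signature theorem, and then to feed in property~$(\ast)$ to control the self-intersections of the fixed 2-spheres. Write the fixed-point set as isolated points $P_1,\dots,P_m$ with tangential rotation data $(a_j,b_j)\in((\mathbb{Z}/p)^\times)^2$ together with embedded 2-spheres $\Sigma_1,\dots,\Sigma_n$ with normal weights $c_k\in(\mathbb{Z}/p)^\times$ and self-intersections $e_k=\Sigma_k\cdot\Sigma_k$. Proposition~\ref{main prop} supplies the Euler characteristic identity $\chi(M)=m+2n$.

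Because $\mathbb{Z}_p$ acts trivially on homology, $\sigma(g^t,M)=\sigma(M)$ for every $t=1,\dots,p-1$. Summing the local $G$-signature formula over these $t$ and using the classical identities
\[
\sum_{t=1}^{p-1}\csc^{2}(\pi tc/p)=\tfrac{p^{2}-1}{3},\qquad \sum_{t=1}^{p-1}\cot^{2}(\pi tc/p)=\tfrac{(p-1)(p-2)}{3}\qquad(\gcd(c,p)=1)
\]
converts the $p-1$ identities into a single relation
\[
(p-1)\sigma(M)=-\sum_{j,t}\cot(\pi ta_j/p)\cot(\pi tb_j/p)+\tfrac{p^{2}-1}{3}\sum_{k=1}^{n}e_{k}.
\]
The cotangent double sum is controlled by the elementary inequality $|\cot x\cot y|\le\tfrac{1}{2}(\cot^{2}x+\cot^{2}y)$, which after summation over $t$ yields the crude upper estimate $\tfrac{1}{3}m(p-1)(p-2)$ on the total point contribution.

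Now property~$(\ast)$ enters: each fixed sphere $\Sigma_k$ representing a non-zero rational class has $e_k\le-1$, and $e_k\le-2$ when $M$ is smoothly minimal (otherwise $\Sigma_k$ would be an exceptional sphere). Substituting $\sum e_k\le-n$, respectively $\sum e_k\le-2n$, into the signature identity, eliminating $m$ via $m=\chi(M)-2n$, and rearranging gives
\[
3(p-1)\,n\le(p-2)\chi(M)-3\sigma(M)\quad\text{and}\quad 2(2p-1)\,n\le(p-2)\chi(M)-3\sigma(M)
\]
in the two cases. Rewriting with $c_1^{2}(M)=2\chi(M)+3\sigma(M)$ yields precisely the first pair of bounds. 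The strict $c_1^{2}$-free bounds then follow from $c_1^{2}(M)+\chi(M)=3(\chi(M)+\sigma(M))=6\bigl(1+b_{2}^{+}(M)\bigr)>0$, which gives $-c_1^{2}(M)<\chi(M)$ and lets one replace $-c_1^{2}(M)$ strictly by $\chi(M)$.

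The principal obstacle I anticipate concerns potential rationally null-homologous fixed 2-spheres: for such a sphere $e_k=0$, so property~$(\ast)$ provides no information, yet the sphere still contributes $2$ to $\chi(M)$. One must either argue that such spheres cannot occur under homological triviality and simple connectivity, or modify the accounting so that the extra units of Euler characteristic do not degrade the bound. The cotangent estimate above is also not manifestly sharp, but it already suffices for the stated inequalities, so the null-homologous fixed spheres seem to be the only genuinely delicate point.
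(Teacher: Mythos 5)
Your proposal is correct and follows essentially the same route as the paper: the $G$-signature theorem, a bound of $\tfrac{1}{3}(p-1)(p-2)$ on each isolated-point defect (the paper proves this via Cauchy--Schwarz on Dedekind/Rademacher sums in Lemma~\ref{main lem}, which is equivalent to your AM--GM estimate on the cotangent sums), property $(\ast)$ to get $e_k\leq -1$ or $-2$, and the same elimination of $m$ and of $\mathrm{sign}(M)$. The one ``delicate point'' you flag --- rationally null-homologous fixed spheres --- is already closed by Proposition~\ref{main prop}, which you cite: since $b_2(M)>0$ forces $F$ to have more than one component, every sphere in $F$ represents a non-zero class in $H_2(M;\mathbb{Z})$, and as $M$ is simply-connected this group is torsion-free, so the class is non-zero in $H_2(M;\mathbb{Q})$ and property $(\ast)$ applies to every fixed sphere.
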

The proof uses the $G$-signature theorem together with an estimate on the signature defects at the fixed points. Even though the proof is elementary, it seems worthwhile to record this fact together with a number of corollaries, in particular in the situation that the theorem implies $n<0$ (no action possible) or $0\leq n<1$ (every action is pseudofree). 

This result has applications especially for smooth, homologically trivial $\mathbb{Z}_2$- and $\mathbb{Z}_3$-actions on general, smooth, simply-connected 4-manifolds as well as for $\mathbb{Z}_p$-actions on possible examples of exotic smooth 4-manifolds homeomorphic to $S^2\times S^2$ or $\mathbb{CP}^2\#{\overline{\mathbb{CP}}{}^{2}}$. Three consequences of the main theorem are Corollary \ref{cor edmonds smooth invol}, Corollary \ref{sign=-1} and Corollary \ref{cor Z2 sign 0 -1} that lead to implications in particular for smooth involutions. The first corollary is related to a special case of the problem from the Kirby list and implies that a simply-connected, {\em non-spin} 4-manifold with {\em positive signature} that satisfies property $(\ast)$ does not admit homologically trivial, {\em smooth} involutions. The same is true according to the second corollary if the signature is equal to $-1$ and the manifold is in addition smoothly minimal. Both results are a partial extension of a theorem of Ruberman for spin 4-manifolds to the non-spin case and contrast a theorem of Edmonds, who has shown that every smooth, simply-connected, non-spin 4-manifold admits a homologically trivial, {\em locally linear} involution. The third corollary implies that a homologically trivial, {\em smooth} involution on a simply-connected 4-manifold that satisfies property $(\ast)$ and has vanishing signature (this is by Ruberman's theorem the case if the 4-manifold is spin, for example) is necessarily pseudofree, i.e.~the fixed point set consists of a collection of isolated points.

\subsection*{Convention} All 4-manifolds in the following will be closed, oriented and connected and have $b_2(M)>0$. All spheres embedded in 4-manifolds will be 2-dimensional. All group actions will be non-trivial and orientation-preserving.

\subsection*{Acknowledgements} I would like to thank Dieter Kotschick for helpful comments regarding reference \cite{Ko} and an anonymous referee for valuable suggestions to improve the quality of the paper.

\section{Spheres in the fixed point set and the $G$-signature theorem}
Let $M$ denote a simply-connected, topological 4-manifold with a {\em locally linear} action of a cyclic group $G=\mathbb{Z}_p$, with $p\geq 2$ a prime. The group action is generated by a locally linear homeomorphism $\tau\colon M\rightarrow M$ of order $p$, such that $\tau$ is not equal to the identity. There is an induced action of $G$ on $H^2(M;\mathbb{Z})$ preserving the intersection form. According to \cite{Ed, KwSch} this action decomposes over the integers into $t$ copies of the trivial action of rank 1, $c$ copies of the cyclotomic action of rank $p-1$ and $r$ copies of the regular action of rank $p$, where $t,c,r$ are certain non-negative integers. As a consequence, the second Betti number of $M$ is equal to
\begin{equation*}
b_2(M)=t+c(p-1)+rp.
\end{equation*}
In particular we have:
\begin{lem}\label{lem p b_2}
If $p>b_2(M)+1$, then $G$ acts trivially on homology.
\end{lem}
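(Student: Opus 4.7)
The plan is to argue directly from the decomposition of the $\mathbb{Z}_p$-action on $H^2(M;\mathbb{Z})$ into trivial, cyclotomic, and regular summands that has been recalled just before the lemma. The hypothesis $p>b_2(M)+1$ is equivalent to $b_2(M)\leq p-2$, and the goal is to deduce that $c=r=0$, so that only trivial summands appear.

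First I would use the identity $b_2(M)=t+c(p-1)+rp$. Since $t,c,r\geq 0$, one has $c(p-1)\leq b_2(M)\leq p-2<p-1$, which forces $c=0$ because $p-1\geq 1$. Similarly $rp\leq b_2(M)\leq p-2<p$ forces $r=0$. Hence $b_2(M)=t$ and the $G$-action on $H^2(M;\mathbb{Z})$ is a direct sum of trivial rank-one summands, i.e.\ $G$ acts trivially on $H^2(M;\mathbb{Z})$.

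To conclude that $G$ acts trivially on the full homology of $M$, I would note that $M$ is simply-connected by the standing convention of the paper, so $H_0(M;\mathbb{Z})=H_4(M;\mathbb{Z})=\mathbb{Z}$ carry the trivial action, while $H_1(M;\mathbb{Z})=0$ and, by Poincar\'e duality together with the universal coefficient theorem, $H_3(M;\mathbb{Z})\cong H^1(M;\mathbb{Z})=0$. Combining these with the triviality of the action on $H^2(M;\mathbb{Z})\cong H_2(M;\mathbb{Z})$ (again by Poincar\'e duality and the fact that $H_2$ is torsion-free in the simply-connected case) gives the claim.

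There is really no serious obstacle; the only point one should not gloss over is that the existence of the decomposition into trivial, cyclotomic, and regular integral summands is a non-trivial input and is cited in the excerpt from the references \cite{Ed, KwSch}. Once that is granted, the lemma is purely a pigeonhole argument on the ranks.
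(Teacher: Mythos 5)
Your proof is correct and follows exactly the route the paper intends: the lemma is stated as an immediate consequence of the formula $b_2(M)=t+c(p-1)+rp$, and your pigeonhole argument forcing $c=r=0$ when $b_2(M)\leq p-2$, together with simple-connectivity to handle the remaining homology groups, is precisely that argument spelled out.
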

Let $F$ denote the fixed point set of the locally linear homeomorphism $\tau$. Since $G$ is of prime order, the set $F$ is the fixed point set of every group element in $G$ different from the identity. The fixed point set $F$ is a closed topological submanifold of $M$ \cite[p.~171]{Br}. The action is locally linear and hence given by an orthogonal action in a neighbourhood of a fixed point. Since the action preserves orientation, the fixed point set $F$ has even codimension \cite{Smith}. It consists of a disjoint union of finitely many isolated points and finitely many closed surfaces. If $p$ is odd, then every surface in the fixed point set is orientable \cite[p.~175]{Br}.

The next lemma follows from \cite[Proposition 2.5]{Ed}:
\begin{lem}Suppose that the fixed point set $F$ has more than one component. Then every surface component of $F$ represents a non-zero class in $H_2(M;\mathbb{Z}_p)$.
\end{lem}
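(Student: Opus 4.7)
My plan is to prove the contrapositive: assume that some surface component $\Sigma\subseteq F$ satisfies $[\Sigma]=0$ in $H_2(M;\mathbb{Z}_p)$, and deduce that $F=\Sigma$, so that $F$ has only one component.

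The first step is a purely algebraic reformulation. Since $M$ is closed and simply-connected, $H_2(M;\mathbb{Z})$ is torsion-free and its intersection form is unimodular, so the mod-$p$ reduction is a non-degenerate pairing on $H_2(M;\mathbb{Z}_p)\cong H_2(M;\mathbb{Z})\otimes\mathbb{Z}_p$. Hence $[\Sigma]=0$ in $H_2(M;\mathbb{Z}_p)$ is equivalent to $[\Sigma]\cdot\beta\equiv 0\pmod p$ for every $\beta\in H_2(M;\mathbb{Z}_p)$. My task therefore becomes: under the assumption that $F$ contains another component $F'$, to exhibit $\beta\in H_2(M;\mathbb{Z}_p)$ with $[\Sigma]\cdot\beta\not\equiv 0\pmod p$.

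The equivariant geometry near $\Sigma$ provides the hook. A $G$-invariant tubular neighborhood $N$ of $\Sigma$ is modelled on the normal disk bundle with $\mathbb{Z}_p$ acting on each fiber by rotation through $2\pi/p$, so $G$ acts freely on $\partial N$ and on $N\setminus\Sigma$; an entirely analogous linear normal form holds near $F'$, with the local rotation weights there (one pair of weights if $F'$ is an isolated point, one weight if $F'$ is a second fixed surface). I would then appeal to the Smith long exact sequence of the $\mathbb{Z}_p$-action on $M$, or equivalently to the equivariant Thom--Gysin sequence of the Borel construction on the pair $(M,N)$, to manufacture a mod-$p$ cohomology class Poincaré dual to $\Sigma$ whose non-vanishing is detected geometrically by a class supported near $F'$; pairing with $[\Sigma]$ then yields a non-zero intersection number modulo $p$. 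This is essentially the content of \cite[Proposition~2.5]{Ed}.

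The main obstacle is this last step: uniformly constructing the dual class and tracing through the Smith--Gysin bookkeeping in a way that shows the local rotation data at $F'$ really survives as a unit mod $p$ in the intersection pairing, across the two possible types of extra component $F'$. Since the linking/equivariant-cohomology calculation is precisely the one carried out by Edmonds, the cleanest presentation is to simply invoke \cite[Proposition~2.5]{Ed}, from which the lemma follows immediately.
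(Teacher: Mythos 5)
Your proposal is correct and ultimately takes the same route as the paper: the paper offers no independent argument and simply states that the lemma follows from \cite[Proposition 2.5]{Ed}, which is exactly where your sketch lands after the preliminary reductions. The additional framing (non-degeneracy of the mod-$p$ intersection pairing, the equivariant tubular neighbourhood) is harmless but not needed once you invoke Edmonds.
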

If the action is not free, then according to \cite[Proposition 2.4]{Ed} the $\mathbb{Z}_p$-Betti numbers of the fixed point $F$ satisfy
\begin{align*}
b_1(F;\mathbb{Z}_p)&=c\\
b_0(F;\mathbb{Z}_p)+b_2(F;\mathbb{Z}_p)&=t+2.
\end{align*}
Let $\chi(M)=b_2(M)+2$ denote the Euler characteristic of $M$. If $G$ acts trivially on homology, then $\chi(F)=\chi(M)$ by the Lefschetz fixed point theorem. Hence the action is not free and we get:
\begin{prop}\label{main prop}
Suppose that $G$ acts trivially on the homology of $M$. Then $F$ consists of a disjoint union of $m$ isolated points and $n$ spheres, with $m+2n=\chi(M)$. Since $b_2(M)>0$, after a choice of orientation, every sphere in $F$ represents a non-zero class in $H_2(M;\mathbb{Z})$.
\end{prop}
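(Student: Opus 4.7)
The plan is to combine the Lefschetz fixed point theorem with the $\mathbb{Z}_p$-Betti number data recalled just above and then to reduce the final assertion to the preceding lemma.

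First I would observe that, since $G$ acts trivially on the homology of $M$, the integral decomposition of the action on $H^2(M;\mathbb{Z})$ forces $c = 0$ and $r = 0$, hence $t = b_2(M)$. The Lefschetz fixed point theorem then gives $\chi(F) = \chi(M) = b_2(M) + 2 > 0$, so $F \neq \emptyset$, the action is not free, and the two displayed formulas for the $\mathbb{Z}_p$-Betti numbers of $F$ apply. In our setting they reduce to $b_1(F;\mathbb{Z}_p) = 0$ and $b_0(F;\mathbb{Z}_p) + b_2(F;\mathbb{Z}_p) = b_2(M) + 2$.

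Next I would classify the surface components of $F$ using the vanishing of $b_1(F;\mathbb{Z}_p)$. If $p$ is odd every such component is orientable, and a closed orientable surface of genus $g$ has $b_1 = 2g$ over any field, forcing $g = 0$ and hence an $S^2$. If $p = 2$, a non-orientable surface $N_k$ satisfies $b_1(N_k;\mathbb{Z}_2) = k \geq 1$, so it cannot occur, and the orientable components are again 2-spheres. Writing $m$ for the number of isolated fixed points and $n$ for the number of sphere components, the identity $\chi(F) = m + 2n = \chi(M)$ yields the first claim.

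For the last assertion I would apply the preceding lemma, whose hypothesis requires $F$ to have more than one component. If $F$ consisted of a single sphere, then $\chi(M) = 2$ would force $b_2(M) = 0$, contradicting the standing assumption $b_2(M) > 0$; a single isolated point is likewise impossible since $\chi(M) \geq 3$. Hence whenever $n \geq 1$ the set $F$ has at least two components, so every sphere in $F$ represents a non-zero class in $H_2(M;\mathbb{Z}_p)$, and choosing an orientation lifts this to an integral class whose mod-$p$ reduction is non-zero, so the integral class itself is non-zero. The only step with any subtlety is the $p = 2$ analysis that excludes non-orientable surface components; everything else is direct bookkeeping with the Lefschetz formula and the $\mathbb{Z}_p$-Betti number identities.
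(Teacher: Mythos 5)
Your proposal is correct and follows essentially the same route as the paper, which derives the proposition directly from the preceding discussion: Edmonds' $\mathbb{Z}_p$-Betti number formulas (with $c=r=0$ and $t=b_2(M)$ by homological triviality), the Lefschetz fixed point theorem giving $\chi(F)=\chi(M)$, and the lemma on components representing non-zero classes in $H_2(M;\mathbb{Z}_p)$. You supply some details the paper leaves implicit (the exclusion of non-orientable components when $p=2$ via $b_1(F;\mathbb{Z}_2)=0$, and the verification that $F$ has more than one component), and these are handled correctly.
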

From now on we assume that the action of $G$ is {\em trivial on homology}. We want to improve the upper bound $\frac{1}{2}\chi(M)$ on the number $n$ of spheres. We can use the $G$-signature theorem \cite{AtSing}, which is valid not only for smooth, but also for locally linear actions in dimension 4, cf.~\cite{Wall} and a remark in \cite[p.~164]{Ed1} (all our applications will be for smooth actions). Let $S_1,\ldots,S_n$ denote the spherical components of the fixed point set $F$ and $P$ the set of isolated fixed points. Note that the signature satisfies
\begin{equation*}
\mathrm{sign}(M/G)=\mathrm{sign}(M),
\end{equation*}
since the action of $G$ is trivial on homology. The $G$-signature theorem implies \cite[p.~14--17]{Hirz}:
\begin{equation*}
(p-1)\mathrm{sign}(M)=\sum_{x\in P}\mathrm{def}_x+\frac{p^2-1}{3}\sum_{i=1}^n[S_i]^2.
\end{equation*}
Here $[S_i]^2$ denotes the self-intersection number of the sphere $S_i$. The numbers $\text{def}_x$ are equal, in Hirzebruch's notation, to $\text{def}(p;q,1)$ for certain integers $q$ coprime to $p$ and depending on $x$. We have
\begin{equation*}
\mathrm{def}(p;q,1)=-\frac{2}{3}(q,p)=-4p\sum_{k=0}^{p-1}\left(\left(\frac{k}{p}\right)\right)\left(\left(\frac{qk}{p}\right)\right).
\end{equation*}
In this equation $(q,p)$ denotes the Dedekind symbol, while $((\cdot))\colon \mathbb{R}\rightarrow\mathbb{R}$ is a certain function introduced by Rademacher and given by
\begin{align*}
((z))&=z-[z]-\frac{1}{2},\quad\text{if $z$ is not an integer}\\
((z))&=0,\quad\text{if $z$ is an integer}.
\end{align*}
Here $[z]$ denotes the greatest integer less than or equal to $z$. We want to prove the following estimate:
\begin{lem}\label{main lem} For all prime numbers $p$ and integers $q$ coprime to $p$ we have
\begin{equation*}
|\mathrm{def}(p;q,1)|\leq |\mathrm{def}(p;1,1)|=\frac{1}{3}(p-1)(p-2).
\end{equation*}
\end{lem}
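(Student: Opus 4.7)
The plan is to evaluate $\mathrm{def}(p;1,1)$ explicitly and then reduce the general inequality to the Cauchy--Schwarz inequality, using the fact that multiplication by $q$ permutes residues modulo $p$.

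For the closed-form evaluation I use that $((0))=0$ and $((k/p)) = k/p - \tfrac{1}{2}$ for $k=1,\ldots,p-1$, so
$$\sum_{k=0}^{p-1}\left(\left(\frac{k}{p}\right)\right)^2 = \sum_{k=1}^{p-1}\left(\frac{k}{p}-\frac{1}{2}\right)^2.$$
Expanding the square and applying the standard identities $\sum_{k=1}^{p-1} k = p(p-1)/2$ and $\sum_{k=1}^{p-1} k^2 = (p-1)p(2p-1)/6$, a short calculation gives, after multiplication by $-4p$, the closed form $\mathrm{def}(p;1,1) = -\tfrac{1}{3}(p-1)(p-2)$. This simultaneously pins down the sign and verifies the claimed value of $|\mathrm{def}(p;1,1)|$ on the right-hand side.

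For the general estimate the key observation is that, since $\gcd(q,p)=1$, multiplication by $q$ modulo $p$ is a bijection of $\{0,1,\ldots,p-1\}$; combined with the fact that $((\cdot))$ is $1$-periodic, this yields
$$\sum_{k=0}^{p-1}\left(\left(\frac{qk}{p}\right)\right)^2 = \sum_{k=0}^{p-1}\left(\left(\frac{k}{p}\right)\right)^2.$$
Applying Cauchy--Schwarz to the vectors in $\mathbb{R}^p$ with entries $((k/p))$ and $((qk/p))$ then gives
$$\left|\sum_{k=0}^{p-1}\left(\left(\frac{k}{p}\right)\right)\left(\left(\frac{qk}{p}\right)\right)\right| \le \sum_{k=0}^{p-1}\left(\left(\frac{k}{p}\right)\right)^2,$$
and multiplying through by $4p$ yields $|\mathrm{def}(p;q,1)| \le |\mathrm{def}(p;1,1)|$.

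There is essentially no obstacle here: the evaluation at $q=1$ is elementary polynomial bookkeeping, and the general bound is a one-line application of Cauchy--Schwarz once the permutation invariance is noted. Moreover, at $q=1$ the two vectors coincide, so Cauchy--Schwarz is saturated and this explains why $q=1$ is extremal.
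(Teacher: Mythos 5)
Your proposal is correct and follows essentially the same route as the paper: the explicit evaluation of $\sum_{k=1}^{p-1}\left(\left(\frac{k}{p}\right)\right)^2=\frac{1}{12p}(p-1)(p-2)$, the observation that multiplication by $q$ permutes the nonzero residues so the two squared sums agree, and the Cauchy--Schwarz bound are exactly the ingredients of the paper's proof. The only (harmless) additional remark you make is that $q=1$ saturates Cauchy--Schwarz, which explains extremality.
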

\begin{proof}
We have by Cauchy-Schwarz
\begin{align*}
\left|\sum_{k=0}^{p-1}\left(\left(\frac{k}{p}\right)\right)\left(\left(\frac{qk}{p}\right)\right)\right|&\leq \left(\sum_{k=1}^{p-1}\left(\left(\frac{k}{p}\right)\right)^2\right)^{\frac{1}{2}}\cdot\left(\sum_{k=1}^{p-1}\left(\left(\frac{qk}{p}\right)\right)^2\right)^{\frac{1}{2}}\\
&= \sum_{k=1}^{p-1}\left(\left(\frac{k}{p}\right)\right)^2,
\end{align*}
because $q$ generates $\mathbb{Z}_p$ and $((0))=0$. Since $0<\frac{k}{p}<1$ for all $k=1,\ldots,p-1$ we have
\begin{align*}
\sum_{k=1}^{p-1}\left(\left(\frac{k}{p}\right)\right)^2&=\sum_{k=1}^{p-1}\left(\frac{k}{p}-\frac{1}{2}\right)^2\\
&=\sum_{k=1}^{p-1}\left(\frac{k^2}{p^2}-\frac{k}{p}+\frac{1}{4}\right)\\
&=\frac{1}{6p^2}(p-1)p(2p-1)-\frac{1}{2p}(p-1)p+\frac{p-1}{4}\\
&=\frac{1}{6p}(2p^2-3p+1)-\frac{1}{2p}(p^2-p)+\frac{p-1}{4}\\
&=\frac{1}{12p}(4p^2-6p+2-6p^2+6p+3p^2-3p)\\
&=\frac{1}{12p}(p^2-3p+2)\\
&=\frac{1}{12p}(p-1)(p-2).
\end{align*}
This implies the claim. The number $\mathrm{def}(p;1,1)$ has also been calculated in equation (28) in \cite{Hirz}.
\end{proof}

We can now prove the main theorem. We use the standard notation 
\begin{equation*}
c_1^2(M)=2\chi(M)+3\mathrm{sign}(M)
\end{equation*}
for every 4-manifold $M$. We abbreviate the following conditions on the action and the manifold by simply saying that "$\mathbb{Z}_p$ acts homologically trivially on a simply-connected 4-manifold $M$":
\begin{quote}
The group $\mathbb{Z}_p$, with $p\geq 2$ prime, acts locally linearly and homologically trivially on a simply-connected, topological 4-manifold $M$.
\end{quote}
We consider in the following only actions of this kind.
\begin{thm}\label{main thm} Let $\mathbb{Z}_p$ act homologically trivially on a simply-connected 4-manifold $M$. Suppose that all spheres $S$ in the fixed point set of the action satisfy an a priori bound $[S]^2\leq s< 0$ for some integer $s$. Then the number $n$ of spheres in the fixed point set satisfies the upper bound
\begin{equation*}
n\leq\frac{p\chi(M)-c_1^2(M)}{p(2-s)-(4+s)}.
\end{equation*}
For all possible values of $c_1^2(M)$ we have the bound
\begin{equation*}
n< \frac{\chi(M)}{2-s}\left(1+\frac{6}{p(2-s)-(4+s)}\right).
\end{equation*}
\end{thm}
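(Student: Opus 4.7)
The plan is to apply the $G$-signature formula quoted above, bound the isolated fixed point defects by Lemma \ref{main lem}, bound the spheres' self-intersections by the hypothesis $[S_i]^2\leq s<0$, and then eliminate the number $m$ of isolated fixed points using the relation $m+2n=\chi(M)$ from Proposition \ref{main prop}. The final rewriting uses $c_1^2(M)=2\chi(M)+3\mathrm{sign}(M)$.

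First I would start from
\begin{equation*}
(p-1)\mathrm{sign}(M)=\sum_{x\in P}\mathrm{def}_x+\frac{p^2-1}{3}\sum_{i=1}^n[S_i]^2
\end{equation*}
and rewrite it as
\begin{equation*}
\frac{p^2-1}{3}\sum_{i=1}^n(-[S_i]^2)=\sum_{x\in P}\mathrm{def}_x-(p-1)\mathrm{sign}(M).
\end{equation*}
The hypothesis $[S_i]^2\leq s<0$ gives $-[S_i]^2\geq -s>0$, so the left side is at least $\frac{p^2-1}{3}\,n(-s)$. For the right side, Lemma \ref{main lem} gives $|\mathrm{def}_x|\leq\frac{1}{3}(p-1)(p-2)$ for every isolated fixed point, so $\sum_{x\in P}\mathrm{def}_x\leq\frac{1}{3}m(p-1)(p-2)$.

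Combining these and dividing by $(p-1)/3$ yields
\begin{equation*}
(p+1)n(-s)\leq m(p-2)-3\mathrm{sign}(M).
\end{equation*}
Now I would substitute $m=\chi(M)-2n$ to collect the $n$ terms on the left:
\begin{equation*}
n\bigl[(p+1)(-s)+2(p-2)\bigr]\leq(p-2)\chi(M)-3\mathrm{sign}(M).
\end{equation*}
The coefficient of $n$ on the left simplifies algebraically to $p(2-s)-(4+s)$, and the right side is exactly $p\chi(M)-c_1^2(M)$ after substituting $c_1^2(M)=2\chi(M)+3\mathrm{sign}(M)$. This gives the first inequality. (One should remark that $p(2-s)-(4+s)>0$ for $p\geq 2$ and $s\leq -1$, so the division preserves the inequality.)

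For the second, $c_1^2(M)$-free bound I would use the a priori estimate $\mathrm{sign}(M)\geq -b_2(M)=-(\chi(M)-2)$, which gives $-3\mathrm{sign}(M)\leq 3\chi(M)-6$, hence $p\chi(M)-c_1^2(M)\leq(p+1)\chi(M)-6<(p+1)\chi(M)$. Dividing by $p(2-s)-(4+s)$ and expanding
\begin{equation*}
\frac{p+1}{p(2-s)-(4+s)}=\frac{1}{2-s}\left(1+\frac{6}{p(2-s)-(4+s)}\right)
\end{equation*}
(which follows because the numerator $(p+1)(2-s)$ equals $[p(2-s)-(4+s)]+6$) yields the stated strict bound. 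No real obstacle is expected: the content is entirely in the $G$-signature theorem and Lemma \ref{main lem}, with the rest being bookkeeping.
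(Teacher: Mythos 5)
Your proposal is correct and follows essentially the same route as the paper: apply the $G$-signature theorem, bound the defects at isolated fixed points via Lemma \ref{main lem}, bound $[S_i]^2$ by $s$, substitute $m=\chi(M)-2n$, and rewrite using $c_1^2(M)=2\chi(M)+3\mathrm{sign}(M)$; the final step via $\mathrm{sign}(M)\geq -(\chi(M)-2)$ is a marginally sharper version of the paper's $\mathrm{sign}(M)>-\chi(M)$ and yields the same strict bound. All the algebra checks out.
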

\begin{proof}
By Proposition \ref{main prop} the number of isolated fixed points in $F$ is $\chi(M)-2n$. By the $G$-signature theorem and Lemma \ref{main lem} we have
\begin{equation*}
(p-1)\mathrm{sign}(M)\leq \frac{1}{3}(p-1)(p-2)(\chi(M)-2n)+\frac{1}{3}sn(p^2-1).
\end{equation*}
This implies the first claim (note that the denominator is positive under our assumption $s< 0$). The second claim follows from the estimate $\mathrm{sign}(M)>-\chi(M)$, which is true for all oriented 4-manifolds with $b_1(M)=0$. 
\end{proof}

\section{Smoothly embedded spheres}\label{sect emb spheres}

\begin{defn}We say that a smooth 4-manifold $M$ satisfies {\bf property} $\mathbf{(\ast)}$ if the following holds:
\begin{quote}
Every smoothly embedded sphere $S$ in $M$ that represents a non-zero homology class $[S]\in H_2(M;\mathbb{Q})$ has negative self-intersection number.
\end{quote}
\end{defn}
We are interested under which conditions a 4-manifold $M$ satisfies property $(\ast)$. The following is clear:
\begin{prop}
Let $M$ be a smooth 4-manifold. Assume that $b_2^+(M)=0$. Then $M$ satisfies property $(\ast)$.
\end{prop}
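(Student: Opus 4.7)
The plan is to reduce this to a direct statement about the intersection form on $H_2(M;\mathbb{Q})$. For a closed, oriented 4-manifold, Poincaré duality implies that the integral intersection form is unimodular modulo torsion, so the induced symmetric bilinear form on $H_2(M;\mathbb{Q})$ is non-degenerate. The number $b_2^+(M)$ is by definition the dimension of a maximal positive-definite subspace for this form, so the hypothesis $b_2^+(M)=0$ says that the rational intersection form is negative (semi)definite; combined with non-degeneracy, it is actually negative definite.

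Given this, the proof is immediate. Let $S \subset M$ be a smoothly embedded sphere with $[S]\neq 0$ in $H_2(M;\mathbb{Q})$. The self-intersection number $[S]^2$ coincides with the value of the rational intersection form on $[S]$, and by negative definiteness this value is strictly negative. Hence $M$ satisfies property $(\ast)$. There is no real obstacle here; the only thing to be careful about is the passage from the integral form (which may have a nontrivial radical coming from torsion) to the rational form, which is handled by quotienting out torsion and invoking unimodularity.
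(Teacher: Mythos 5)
Your argument is correct and is exactly the reasoning the paper has in mind: the paper offers no proof at all, introducing the proposition with ``The following is clear,'' and the intended justification is precisely that Poincar\'e duality makes the rational intersection form non-degenerate, so $b_2^+(M)=0$ forces it to be negative definite and every class that is non-zero in $H_2(M;\mathbb{Q})$ has negative square. Your care about passing from the integral form modulo torsion to the rational form is appropriate but, as you note, presents no obstacle.
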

The next theorem is well-known, cf.~\cite[Proposition 1]{Ko}. The statement also follows from the adjunction inequality \cite{KrMr, GS}. 
\begin{prop}\label{prop sphere adjunction} Let $M$ be a smooth 4-manifold. Assume that $b_2^+(M)>1$ and the Seiberg-Witten invariants of $M$ do not vanish identically. Then $M$ satisfies property $(\ast)$.
\end{prop}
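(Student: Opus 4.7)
My plan is to prove the contrapositive via the adjunction inequality for Seiberg-Witten basic classes. Suppose $S \subset M$ is a smoothly embedded 2-sphere with $[S] \neq 0$ in $H_2(M;\mathbb{Q})$; I want to conclude $[S]^2 < 0$. Assume for contradiction that $[S]^2 \geq 0$. Since the hypothesis gives $b_2^+(M) > 1$ and the Seiberg-Witten invariants do not vanish identically, there exists a Seiberg-Witten basic class $K \in H^2(M;\mathbb{Z})$, i.e.\ a characteristic class with $SW_M(K) \neq 0$.

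Next I would invoke the adjunction inequality of Kronheimer--Mrowka (see \cite{KrMr, GS}): for any smoothly embedded closed oriented surface $\Sigma$ of genus $g$ in $M$ representing a non-torsion class with $[\Sigma]^2 \geq 0$, and for any basic class $K$, one has
\begin{equation*}
2g(\Sigma) - 2 \;\geq\; [\Sigma]^2 + |\langle K,[\Sigma]\rangle|.
\end{equation*}
Applied to $\Sigma = S$ with $g(S) = 0$, this becomes $-2 \geq [S]^2 + |\langle K,[S]\rangle|$. But the right-hand side is non-negative under our assumption $[S]^2 \geq 0$, which is a contradiction. Hence $[S]^2 < 0$, which is exactly property $(\ast)$.

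The one point that requires a bit of care is verifying the hypotheses of the adjunction inequality. The condition $b_2^+(M) > 1$ is needed so that Seiberg-Witten invariants are chamber-independent and basic classes are well-defined. The non-torsion hypothesis in the adjunction inequality is guaranteed by our assumption that $[S]$ is non-zero in $H_2(M;\mathbb{Q})$. The main (and only) obstacle is really just citing the correct form of the adjunction inequality; everything else is a one-line numerical contradiction, and indeed this is exactly the argument recorded in \cite[Proposition 1]{Ko}.
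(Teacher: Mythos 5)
Your argument is correct and is exactly the route the paper takes: the paper offers no proof of its own beyond citing \cite[Proposition 1]{Ko} and remarking that the statement ``also follows from the adjunction inequality \cite{KrMr, GS}'', which is precisely the contradiction $-2\geq [S]^2+|\langle K,[S]\rangle|\geq 0$ that you spell out. The only point of care, which you correctly identify, is that one must invoke the version of the adjunction inequality valid for genus-zero surfaces of non-negative square representing a non-torsion class, and that is exactly the content of \cite[Proposition 1]{Ko}.
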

We did not find in the literature a similarly general theorem in the case of 4-manifolds $M$ with $b_2^+(M)=1$. To describe what we can show in this case, recall that a {\em rational surface} is a smooth 4-manifold diffeomorphic to $S^2\times S^2, \mathbb{CP}^2$ or $\mathbb{CP}^2\#n{\overline{\mathbb{CP}}{}^{2}}$ with $n\geq 1$, while a {\em ruled surface} is an oriented $S^2$-bundles over a Riemann surface $\Sigma_g$ of genus $g\geq 0$ (there exist up to diffeomorphism two such ruled surfaces for every genus $g$. The ruled surface is called {\em irrational} if $g\geq 1$.) We can then prove the following:
\begin{prop}\label{prop sphere adjunction b+=1} Let $M$ be a smooth 4-manifold. Assume that $b_2^+(M)=1$, $b_2^-(M)\leq 9$, $b_1(M)=0$ and $M$ is not diffeomorphic to a rational surface. If $M$ admits a symplectic form, then $M$ satisfies property $(\ast)$.
\end{prop}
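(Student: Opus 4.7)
My plan is to argue by contradiction: assume $S \subset M$ is a smoothly embedded sphere with $0 \neq [S] \in H_2(M;\mathbb{Q})$ and $[S]^2 \geq 0$, and derive a contradiction from the hypotheses on $M$.

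The first step is to rule out that $M$ is ruled. Any irrational ruled surface, or any blowup of one, has $b_1 = 2g \geq 2$ (where $g$ is the genus of the base), contradicting $b_1(M)=0$; rational ruled surfaces and their blowups are themselves rational, which is excluded by hypothesis. So $M$ is neither rational nor ruled. By the classification of minimal symplectic 4-manifolds with $b_2^+ = 1$ (Liu, Li--Liu), this forces the minimal model $M_0$ of $M$ to have Kodaira dimension $\geq 0$: the canonical class satisfies $K_{M_0}^2 \geq 0$ and $K_{M_0} \cdot [\omega] \geq 0$. Observe also that under $b_2^+(M)=1$ and $b_1(M)=0$ the bound $b_2^-(M) \leq 9$ is equivalent to $c_1^2(M) \geq 0$, a property that enters the Seiberg--Witten input below.

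Next I would invoke Taubes' theorem, by which $K_\omega$ is a Seiberg--Witten basic class of $M$ in the chamber determined by $\omega$, together with the Seiberg--Witten adjunction inequality for $b_2^+=1$ symplectic 4-manifolds (Ohta--Ono, Li--Liu): for every smoothly embedded surface $\Sigma$ of genus $g$ with $[\Sigma] \neq 0$ and $[\Sigma]^2 \geq 0$ one has
\[
2g - 2 \;\geq\; [\Sigma]^2 + |K_\omega \cdot [\Sigma]|.
\]
Applied to the sphere $S$ (so $g=0$), this yields $-2 \geq [S]^2 + |K_\omega \cdot [S]| \geq 0$, the desired contradiction.

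The main obstacle is the chamber dependence of the Seiberg--Witten invariants when $b_2^+=1$: one must ensure that the basic class witnessing Taubes' non-vanishing is the same one that witnesses the adjunction inequality against the class $[S]$, which in general requires a careful wall-crossing argument or an appropriate choice of perturbation. The hypothesis $b_2^-(M) \leq 9$, equivalent to $c_1^2(M) \geq 0$, is precisely what keeps us in the regime where the sharp form of adjunction for $b_2^+=1$ and the relevant non-vanishing are available in the literature. If $M$ itself is not minimal, one can alternatively pull $S$ back through the blowdown $M \to M_0$ and apply the blowup formula for Seiberg--Witten invariants to transfer basic classes from $M_0$ back to $M$ before invoking the inequality.
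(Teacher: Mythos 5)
Your overall strategy matches the paper's: rule out the rational and ruled cases via Liu's theorem and $b_1(M)=0$, note that $b_2^-(M)\leq 9$ is equivalent to $K^2\geq 0$, and then feed a hypothetical sphere with $[S]\neq 0$, $[S]^2\geq 0$ into a $b_2^+=1$ adjunction inequality to reach $-2\geq 0$. But there is a genuine gap at the decisive step, and you have flagged it yourself without closing it. The inequality you invoke, $2g-2\geq[\Sigma]^2+|K_\omega\cdot[\Sigma]|$ for every embedded surface with $[\Sigma]\neq 0$ and $[\Sigma]^2\geq 0$, is stronger than what Li--Liu actually prove in the $b_2^+=1$ setting. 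Their statement (the one the paper cites) is $2g(C)-2\geq K\cdot[C]+[C]^2$, without the absolute value, and only under the additional hypothesis $[C]\cdot\omega>0$. Because of chamber dependence one cannot simply upgrade this to the absolute-value form; your closing remark that this ``requires a careful wall-crossing argument or an appropriate choice of perturbation'' concedes the point but does not supply the argument. As written, your contradiction $-2\geq[S]^2+|K_\omega\cdot[S]|\geq 0$ rests on an unproved inequality.

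The paper closes exactly this gap with an elementary light cone lemma, and you could too. Since $M$ is neither rational nor (a blow-up of) an irrational ruled surface, Liu's theorem gives $K\cdot\omega\geq 0$, and $b_2^-(M)\leq 9$ gives $K^2\geq 0$; by the light cone lemma $K$ then lies in the closure of the forward cone determined by $\omega$. Orienting $S$ so that $[S]$ also lies in the closure of the forward cone, the lemma yields both $[S]\cdot\omega>0$ (so the Li--Liu hypothesis is satisfied) and $K\cdot[S]\geq 0$ (so no absolute value is needed): the literal Li--Liu inequality gives $-2\geq K\cdot[S]+[S]^2\geq 0$. Note also that this argument works directly on $M$ itself; there is no need to pass to a minimal model or invoke the blow-up formula as in your last paragraph, since Liu's theorem already covers blow-ups of ruled surfaces.
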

\begin{rem}
In this situation, the assumption $b_2^-(M)\leq 9$ is equivalent to $K^2\geq 0$, where $K$ denotes the canonical class of the symplectic form, because $K^2=2\chi(M)+3\mathrm{sign}(M)$.
\end{rem}
For the proof recall the following theorem of Liu \cite[Theorem B]{Liu} (slightly adapted to make the statement more precise):
\begin{thm}[Liu]\label{thm liu} Let $M$ be a symplectic 4-manifold with $b_2^+(M)=1$. If $K\cdot\omega<0$, then $M$ must be either rational or (a blow-up of) an irrational ruled 4-manifold.
\end{thm}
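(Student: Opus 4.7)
The plan is to suppose, toward a contradiction, that $M$ contains a smoothly embedded sphere $S$ with $[S]\neq 0$ in $H_2(M;\mathbb{Q})$ and $[S]^2\geq 0$, and to close the argument with the Seiberg--Witten adjunction inequality applied in the chamber of $\omega$, where, by Taubes's theorem, the canonical class $K$ is a basic class with Seiberg--Witten invariant $\pm 1$.

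First I would record what the hypotheses give. By the remark, $b_2^-(M)\leq 9$ is equivalent to $K^2\geq 0$. To obtain $K\cdot\omega\geq 0$ I would invoke Liu's theorem (Theorem \ref{thm liu}): the only way for $K\cdot\omega<0$ is that $M$ is rational or a blow-up of an irrational ruled surface. The former is excluded by assumption, and the latter has $b_1=2g\geq 2$ (invariant under blow-up), contradicting $b_1(M)=0$. Thus both $K^2\geq 0$ and $K\cdot\omega\geq 0$, i.e.\ $K$ lies in the closed forward light cone of the Lorentzian intersection form on $H^2(M;\mathbb{R})$, which has signature $(1,b_2^-)$ since $b_2^+=1$.

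Next I would put $[S]$ into the same forward cone by a standard Lorentzian argument. Since $[S]\neq 0$ and $[S]^2\geq 0$, the class $[S]$ cannot be orthogonal to the timelike class $\omega$: otherwise it would lie in the negative-definite orthogonal complement of $\omega$ and have nonpositive square, with equality only at $0$. Hence $[S]\cdot\omega\neq 0$, and after possibly reversing the orientation of $S$ we may assume $[S]\cdot\omega>0$. The reverse Cauchy--Schwarz inequality for pairs of vectors in the closed forward cone then yields $K\cdot[S]\geq 0$.

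Finally, I would apply the Seiberg--Witten adjunction inequality in the $\omega$-chamber, valid since $\mathrm{SW}_M(K)=\pm 1\neq 0$ in that chamber and $[S]\cdot\omega>0$ with $[S]^2\geq 0$. For the sphere $S$ this reads
\begin{equation*}
-2 \;=\; 2g(S)-2 \;\geq\; [S]^2 + K\cdot[S] \;\geq\; 0,
\end{equation*}
which is a contradiction. The main obstacle is citing the adjunction inequality in the right form for $b_2^+=1$ and being careful about the chamber structure (via Taubes and the $b_2^+=1$ adjunction inequality of Ozsv\'ath--Szab\'o); the remainder is the light-cone computation combined with a direct application of Liu's theorem.
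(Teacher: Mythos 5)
Your proposal does not address the statement it is supposed to prove. The statement in question is Liu's classification theorem (Theorem \ref{thm liu}): for a symplectic 4-manifold with $b_2^+(M)=1$, the condition $K\cdot\omega<0$ forces $M$ to be rational or a blow-up of an irrational ruled surface. What you have written is instead an argument for Proposition \ref{prop sphere adjunction b+=1} (that such an $M$, under the additional hypotheses $b_2^-(M)\leq 9$, $b_1(M)=0$, not rational, satisfies property $(\ast)$) --- and, crucially, your argument explicitly \emph{invokes} Theorem \ref{thm liu} in its first step (``To obtain $K\cdot\omega\geq 0$ I would invoke Liu's theorem''). As a proof of Theorem \ref{thm liu} this is circular; as written it establishes a different result. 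You have also imported hypotheses ($b_2^-\leq 9$, $b_1=0$, existence of an embedded sphere) that are not part of the statement of Theorem \ref{thm liu} at all.

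For the record, the paper does not prove Theorem \ref{thm liu} either: it is quoted from Liu's paper (Theorem B of the reference \cite{Liu}), where it is established by a genuinely different and much deeper mechanism --- the wall-crossing formula for Seiberg--Witten invariants when $b_2^+=1$, combined with Taubes's results relating Seiberg--Witten and Gromov invariants to produce symplectic spheres of nonnegative self-intersection when $K\cdot\omega<0$, and then the classification of symplectic manifolds containing such spheres. None of that appears in your proposal. Your light-cone and adjunction-inequality reasoning is essentially the paper's proof of Proposition \ref{prop sphere adjunction b+=1} (via Lemma \ref{lem light} and Theorem \ref{thm li liu}), and it is fine in that role, but it cannot be repurposed as a proof of the theorem it takes as an input.
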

We also need an adjunction inequality of Li and Liu \cite[p.~467]{LiLiu}:
\begin{thm}[Li-Liu]\label{thm li liu}
Suppose $M$ is a symplectic 4-manifold with $b_2^+(M)=1$ and $\omega$ is the symplectic form. Let $C$ be a smooth, connected, embedded surface with non-negative self-intersection. If $[C]\cdot\omega>0$, then the genus of $C$ satisfies $2g(C)-2\geq K\cdot[C]+[C]^2$.
\end{thm}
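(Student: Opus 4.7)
The plan is to assume for contradiction that $M$ contains a smoothly embedded sphere $S$ with $[S]\neq 0$ in $H_2(M;\mathbb{Q})$ and $[S]^2 \geq 0$, and to combine Theorems \ref{thm liu} and \ref{thm li liu} with a Hodge-index-type argument to derive a contradiction.

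\textbf{Step 1: the sign of $K\cdot\omega$.} I first rule out the alternatives in Liu's theorem. By hypothesis $M$ is not rational, and any blow-up of an $S^2$-bundle over $\Sigma_g$ with $g\geq 1$ has $b_1 = 2g \geq 2$, which is incompatible with $b_1(M)=0$. The contrapositive of Theorem \ref{thm liu} therefore gives $K\cdot\omega \geq 0$.

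\textbf{Step 2: adjunction applied to $S$.} Since $b_2^+(M)=1$, the intersection form has signature $(1,b_2^-)$, with $\omega$ spanning a positive line and its orthogonal complement $V$ negative definite. A nonzero class $\alpha$ with $\alpha^2\geq 0$ and $\alpha\cdot\omega=0$ would lie in $V$ and therefore satisfy $\alpha^2 < 0$, a contradiction; hence $[S]\cdot\omega \neq 0$, and after reversing the orientation of $S$ if necessary I may assume $[S]\cdot\omega > 0$. Theorem \ref{thm li liu} applied to $C=S$ then yields
\begin{equation*}
-2 = 2g(S) - 2 \geq K\cdot[S] + [S]^2 \geq K\cdot[S],
\end{equation*}
so $K\cdot[S] \leq -2$.

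\textbf{Step 3: contradiction via Cauchy-Schwarz.} By the remark following the proposition, $b_2^-(M)\leq 9$ is equivalent to $K^2\geq 0$. I decompose $K = \lambda\omega + K^\perp$ and $[S] = \mu\omega + S^\perp$ with $K^\perp, S^\perp \in V$. The sign information from Steps 1 and 2 gives $\lambda\geq 0$ and $\mu>0$, while $K^2\geq 0$ and $[S]^2\geq 0$ translate, via $K^2 = \lambda^2\omega^2 + (K^\perp)^2$ and the analogous identity for $[S]$, into the bounds $-(K^\perp)^2\leq\lambda^2\omega^2$ and $-(S^\perp)^2\leq\mu^2\omega^2$. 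Ordinary Cauchy-Schwarz for the positive-definite inner product $-u\cdot v$ on $V$ gives $(K^\perp\cdot S^\perp)^2 \leq (K^\perp)^2(S^\perp)^2 \leq \lambda^2\mu^2(\omega^2)^2$, hence $K^\perp\cdot S^\perp \geq -\lambda\mu\omega^2$ and
\begin{equation*}
K\cdot[S] = \lambda\mu\omega^2 + K^\perp\cdot S^\perp \geq 0,
\end{equation*}
contradicting $K\cdot[S]\leq -2$. The main work lies in Step 3, where the Lorentz-signature bookkeeping has to be handled without sign errors; Theorems \ref{thm liu} and \ref{thm li liu} are invoked as black boxes, and $b_1(M)=0$ enters precisely to exclude the irrational ruled case on Liu's list.
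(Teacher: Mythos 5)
There is a fundamental mismatch here: the statement you were asked to prove is Theorem \ref{thm li liu}, the Li--Liu generalized adjunction inequality for symplectic 4-manifolds with $b_2^+=1$, but your argument does not prove it --- it \emph{uses} it. Your proposal explicitly invokes Theorem \ref{thm li liu} as a black box in Step 2, so read as a proof of that theorem it is circular. What you have actually written is a proof of Proposition \ref{prop sphere adjunction b+=1} (that such an $M$ satisfies property $(\ast)$), which is a different statement. The Li--Liu inequality itself is a substantive gauge-theoretic result, proved in \cite{LiLiu} via Seiberg--Witten theory and the wall-crossing formula for $b_2^+=1$; it cannot be obtained from the light-cone/Cauchy--Schwarz bookkeeping in your Step 3, and indeed the paper quotes it from the literature without proof. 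No elementary intersection-form argument will produce a genus bound of the form $2g(C)-2\geq K\cdot[C]+[C]^2$.

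For what it is worth, as a proof of Proposition \ref{prop sphere adjunction b+=1} your argument is correct and is essentially the paper's own: your Step 1 is the paper's use of Theorem \ref{thm liu} to place $K$ in the closure of the forward cone (the exclusion of the irrational ruled case via $b_1=0$ is exactly right), your Step 2 matches the paper's application of Theorem \ref{thm li liu}, and your Step 3 is precisely part (b) of Lemma \ref{lem light} with the Lorentzian decomposition and Cauchy--Schwarz written out explicitly rather than quoted. But none of this addresses the statement you were assigned.
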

We have the following general light cone lemma, compare with \cite[Lemma 2.6]{LiLiu}:
\begin{lem}\label{lem light}
Let $M$ be a 4-manifold with $b_2^+(M)=1$. The forward cone is one of the two connected components of $\{a\in H^2(M;\mathbb{R})\mid a^2>0\}$. Then the following holds for all elements $a,b\in H^2(M;\mathbb{R})$:
\begin{enumerate}
\item If $a$ is in the forward cone and $b$ in the closure of the forward cone with $b\neq 0$, then $a\cdot b>0$.
\item If $a$ and $b$ are in the closure of the forward cone, then $a\cdot b\geq 0$.
\item If $a$ is in the forward cone and $b$ satisfies $b^2\geq 0$ and $a\cdot b\geq 0$, then $b$ is in the closure of the forward cone.
\end{enumerate}
\end{lem}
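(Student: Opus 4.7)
The approach is to reduce the three statements to a standard computation on Minkowski space. Since $b_2^+(M)=1$, the intersection form on $H^2(M;\mathbb{R})$ has signature $(1,N)$ with $N=b_2^-(M)$, so I would first choose an orthogonal basis $e_0,e_1,\ldots,e_N$ with $e_0^2=1$ and $e_i^2=-1$ for $i\geq 1$, chosen so that the forward cone is $\{a:a_0>0,\; a_0^2>a_1^2+\cdots+a_N^2\}$. Writing $a=(a_0,a')$ with $a'\in\mathbb{R}^N$, the pairing becomes $a\cdot b=a_0b_0-a'\cdot b'$, where $a'\cdot b'$ is the usual Euclidean inner product. A key preliminary observation is that any nonzero element of the closure of the forward cone has $a_0>0$: the inequality $a_0\geq|a'|\geq 0$ combined with $a_0=0$ would force $a'=0$ and hence $a=0$.

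For (a), given $a$ in the forward cone and $b$ in its closure with $b\neq 0$, I would apply Cauchy--Schwarz to obtain $a\cdot b\geq a_0b_0-|a'|\,|b'|$, and then combine the strict inequality $a_0>|a'|\geq 0$ with $b_0\geq|b'|\geq 0$ and $b_0>0$ to conclude $a\cdot b>0$. Part (b) follows by the same calculation with weak inequalities throughout, after separately noting that the claim is trivial if $a=0$ or $b=0$.

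For (c), the hypothesis $b^2\geq 0$ places $b$ either at the origin or in the closure of one of the two connected components of $\{c:c^2>0\}$. If $b=0$ the conclusion is immediate; if $b$ lies in the closure of the backward cone and is nonzero, then $-b$ is a nonzero element of the closure of the forward cone, so part (a) applied to the pair $(a,-b)$ yields $a\cdot(-b)>0$, contradicting $a\cdot b\geq 0$. Hence $b$ must lie in the closure of the forward cone.

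I do not anticipate a real obstacle here; the argument is essentially one Cauchy--Schwarz estimate repeated in three slightly different forms. The only point requiring care is bookkeeping with the boundary of the light cone, in particular ruling out the degenerate case $a_0=0$ on the null directions, which is handled by the preliminary observation above.
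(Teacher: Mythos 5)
Your proposal is correct and follows essentially the same route as the paper: diagonalize the intersection form to signature $(1,N)$, prove (a) and (b) by Cauchy--Schwarz on the spatial parts, and deduce (c) by applying (a) to $-b$ to derive a contradiction. Your write-up is in fact slightly more careful than the paper's, e.g.\ in noting explicitly that a nonzero null vector in the closure of the forward cone has $a_0>0$.
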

\begin{proof}
With respect to a suitable basis of the vector space $H^2(M;\mathbb{R})$ we have $a\cdot b=a_0b_0-\sum a_ib_i$ where the elements $a$ in the forward cone satisfy $a_0>0$. Then (a) and (b) follow by applying the Cauchy-Schwarz inequality:
\begin{equation*}
\sum a_ib_i\leq \sqrt{\sum a_i^2}\sqrt{\sum b_i^2}.
\end{equation*}
For (c) assume by contradiction $b_0<0$. Then the vector $-b$ is in the closure of the forward cone, so (a) implies $a\cdot (-b)>0$ and hence $a\cdot b<0$, a contradiction.
\end{proof}
We can now prove Proposition \ref{prop sphere adjunction b+=1}:
\begin{proof}
Let the forward cone be defined by the class of $\omega$. Our assumptions together with Theorem \ref{thm liu} and Lemma \ref{lem light} imply that the canonical class $K$ is in the closure of the forward cone. Suppose that the class $[S]$ of a sphere $S$ satisfies $[S]\neq 0$ and $[S]^2\geq 0$. Choose the orientation on $S$ such that $[S]$ is in the closure of the forward cone. By Lemma \ref{lem light}, $[S]\cdot \omega>0$. Then Theorem \ref{thm li liu} applies and shows that $-2\geq K\cdot[S]+[S]^2$. However, Lemma \ref{lem light} implies that $K\cdot[S]\geq 0$. This is a contradiction.
\end{proof}
We conjecture the following:
\begin{conj} Let $M$ be a smooth 4-manifold. Assume that $b_2^+(M)=1$, $b_2^-(M)\leq 9$, $H_1(M;\mathbb{Z})=0$ and $M$ has non-trivial small perturbation Seiberg-Witten invariants. Then $M$ satisfies property $(\ast)$. 
\end{conj}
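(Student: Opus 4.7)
The plan is to adapt the argument of Proposition~\ref{prop sphere adjunction b+=1}, replacing the role of the symplectic canonical class by a small perturbation Seiberg--Witten basic class and the Li--Liu adjunction inequality by its small-perturbation analogue (in the spirit of Ozsv\'ath--Szab\'o or Fintushel--Stern). Concretely, let $\mathfrak{s}$ be a Spin$^c$ structure on $M$ with non-vanishing small perturbation Seiberg--Witten invariant in some chamber, and set $K=c_1(\mathfrak{s})$; the goal is to show that $K$ behaves enough like a canonical class to force the contradiction at the end.

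The first step is to place $K$, up to replacing $\mathfrak{s}$ by its conjugate, in the closure of the forward cone. Using $H_1(M;\mathbb{Z})=0$ and $b_2^+(M)=1$ one computes $\chi(M)=3+b_2^-(M)$ and $\mathrm{sign}(M)=1-b_2^-(M)$, so that $b_2^-(M)\leq 9$ translates into $2\chi(M)+3\mathrm{sign}(M)=9-b_2^-(M)\geq 0$. The formal dimension of the Seiberg--Witten moduli space for $\mathfrak{s}$ is $(K^2-2\chi(M)-3\mathrm{sign}(M))/4$, and must be non-negative for the invariant to be non-zero; hence $K^2\geq 0$. Consequently either $K$ or $-K$ lies in the closure of the forward cone, and after passing to $\bar{\mathfrak{s}}$ if needed and choosing the forward cone via a period point $\omega$ in whose chamber $SW^\pm(\mathfrak{s})\neq 0$, one may assume that $K$ is in the closure of the forward cone.

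One then argues by contradiction: suppose $S\subset M$ is a smoothly embedded sphere with $[S]\neq 0$ in $H_2(M;\mathbb{Q})$ and $[S]^2\geq 0$. Since $b_2^+(M)=1$, after reversing the orientation of $S$ if necessary $[S]$ lies in the closure of the forward cone, and Lemma~\ref{lem light}(a), (b) yield $[S]\cdot\omega>0$ and $K\cdot [S]\geq 0$. The small-perturbation adjunction inequality applied to $\mathfrak{s}$ in the chamber of $\omega$ then gives
\begin{equation*}
-2=2g(S)-2\geq [S]^2+K\cdot [S]\geq 0,
\end{equation*}
a contradiction, which completes the proof.

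The main obstacle is the chamber bookkeeping in the $b_2^+(M)=1$ setting: one must verify that after possibly exchanging $\mathfrak{s}$ with $\bar{\mathfrak{s}}$ the basic class $K$, the period point $\omega$ and the sphere class $[S]$ can simultaneously be arranged so that the small-perturbation adjunction inequality applies with the sign $+K\cdot [S]$ on the right-hand side. The hypothesis $H_1(M;\mathbb{Z})=0$ should eliminate the wall-crossing subtleties due to non-trivial $b_1$, and the fact that the small perturbation invariant is assumed non-trivial (rather than in a specified chamber) should give enough flexibility to align all signs; spelling this out rigorously is the technical heart of the conjecture.
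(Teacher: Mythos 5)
The statement you are trying to prove is stated in the paper as a \emph{conjecture}: the author offers no proof of it, and the surrounding text makes clear why --- the symplectic case (Proposition~\ref{prop sphere adjunction b+=1}) is proved precisely because Liu's theorem and the Li--Liu adjunction inequality are available there, and no analogue is known for general small perturbation Seiberg--Witten invariants. Your strategy is the natural one (mirror the symplectic proof with $K=c_1(\mathfrak{s})$ for a basic class $\mathfrak{s}$), and the preliminary steps are fine: $d(\mathfrak{s})\geq 0$ together with $b_2^-(M)\leq 9$ does give $K^2\geq 2\chi(M)+3\,\mathrm{sign}(M)\geq 0$, and Lemma~\ref{lem light} lets you normalize $K$ and $[S]$ into the closure of a forward cone.

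The genuine gap is the step you yourself flag and then defer: the ``small-perturbation adjunction inequality'' you apply to a \emph{sphere} of non-negative self-intersection in a $b_2^+=1$ manifold is not a known theorem, and it is exactly the content of the conjecture, not a technicality. The adjunction inequalities of Kronheimer--Mrowka, Ozsv\'ath--Szab\'o and Fintushel--Stern either assume $b_2^+>1$, or assume $g(C)\geq 1$, or (in the Li--Liu case) use Taubes' nonvanishing in the symplectic chamber to control the wall-crossing. For genus $0$ the standard neck-stretching argument along the unit circle bundle over $S$ only kills the invariant in the chamber selected by the limiting period point, and since the wall-crossing difference for $b_1=0$ is $\pm 1$, the invariant in the other chamber can perfectly well be nonzero; so ``non-trivial small perturbation invariant'' does not by itself contradict the existence of $S$. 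The hypothesis $H_1(M;\mathbb{Z})=0$ simplifies the wall-crossing formula but does not remove this ambiguity. Your closing paragraph concedes that aligning $\mathfrak{s}$ versus $\bar{\mathfrak{s}}$, the chamber of $\omega$, and the class $[S]$ ``is the technical heart of the conjecture''; as written, the proposal is therefore a reduction of the conjecture to an unproved adjunction statement rather than a proof.
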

For a definition of the small perturbation Seiberg-Witten invariants see \cite{Sz}. 

\section{The main corollary for smooth actions}\label{sect main cors}

Recall that an oriented 4-manifold is called {\em (smoothly) minimal} if it does not contain smoothly embedded spheres of self-intersection $-1$.
\begin{cor}\label{main cor to thm}
Let the group $\mathbb{Z}_p$ act homologically trivially and smoothly on a simply-connected, smooth 4-manifold $M$ that satisfies property $(\ast)$. Then
\begin{equation*}
n\leq\frac{p\chi(M)-c_1^2(M)}{3(p-1)}.
\end{equation*}
If in addition $M$ is smoothly minimal, then
\begin{equation*}
n\leq\frac{p\chi(M)-c_1^2(M)}{2(2p-1)}.
\end{equation*}
Independently of $c_1^2(M)$ we have in these cases the bounds
\begin{equation*}
n<  \frac{\chi(M)}{3}\left(1+\frac{2}{p-1}\right)
\end{equation*}
and
\begin{equation*}
n<  \frac{\chi(M)}{4}\left(1+\frac{3}{2p-1}\right),
\end{equation*}
respectively.
\end{cor}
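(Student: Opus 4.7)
The plan is to deduce the corollary from Theorem \ref{main thm} by obtaining an a priori bound $[S]^2 \leq s < 0$ in the two cases of interest, first with $s=-1$ and then, under the minimality hypothesis, with $s=-2$, and finally to substitute these values into the formulas of the theorem.

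First I would verify that each fixed sphere is a smoothly embedded 2-sphere representing a non-zero rational homology class. Since the action is smooth, the fixed point set is a smooth submanifold, so the spheres $S_1,\ldots,S_n$ of Proposition \ref{main prop} are smoothly embedded. Proposition \ref{main prop} also gives $[S_i] \neq 0$ in $H_2(M;\mathbb{Z})$; since $M$ is simply-connected and hence $H_2(M;\mathbb{Z})$ is torsion-free, this implies $[S_i] \neq 0$ in $H_2(M;\mathbb{Q})$. Now property $(\ast)$ applies and yields $[S_i]^2 < 0$. Because the self-intersection is an integer, we obtain the a priori bound $[S_i]^2 \leq -1$ for every $i$, so Theorem \ref{main thm} with $s=-1$ is applicable. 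A direct computation gives $p(2-s)-(4+s) = 3(p-1)$ and $2-s = 3$, yielding the first displayed inequality, and in the general bound the factor $\frac{6}{3(p-1)} = \frac{2}{p-1}$ produces the third inequality.

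If in addition $M$ is smoothly minimal, then by definition $M$ contains no smoothly embedded 2-sphere of self-intersection $-1$. Since each $S_i$ is smoothly embedded, this rules out $[S_i]^2 = -1$, and combined with the bound from the previous paragraph we conclude $[S_i]^2 \leq -2$. Applying Theorem \ref{main thm} now with $s=-2$ gives $p(2-s)-(4+s) = 4p-2 = 2(2p-1)$ and $2-s = 4$, and the factor $\frac{6}{2(2p-1)} = \frac{3}{2p-1}$; this produces the second and fourth displayed inequalities.

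There is no real obstacle to overcome: the corollary is essentially a direct specialisation of Theorem \ref{main thm}, and the only substantive point is the identification of the correct integer value of $s$ in each case (that property $(\ast)$ together with integrality forces $s=-1$, and that smooth minimality upgrades this to $s=-2$). The rest is arithmetic.
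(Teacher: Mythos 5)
Your proposal is correct and follows exactly the paper's own argument: smoothness of the action gives smoothly embedded fixed spheres, property $(\ast)$ (plus integrality) yields $s=-1$, minimality upgrades this to $s=-2$, and the rest is substitution into Theorem \ref{main thm}. The arithmetic checks out in all four cases.
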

\begin{proof}
If the action is smooth, then every sphere in $F$ is smoothly embedded \cite[p.~309]{Br}. The first claim follows with Theorem \ref{main thm}, since $[S]^2\leq -1$ for every embedded sphere $S$ representing a non-zero homology class if $M$ satisfies property $(\ast)$. If $M$ is smoothly minimal, spheres of self-intersection $-1$ do not exist in $M$, hence $[S]^2\leq -2$.
\end{proof}
This improves the a priori bound $n\leq \frac{1}{2}\chi(M)$ by a factor of approximately $\frac{2}{3}$ and $\frac{1}{2}$, at least for large $p$.
\begin{ex} Let $M=E(k)_{a,b}$ be a simply-connected, minimal elliptic surface with multiple fibres of coprime indices $a,b$. Assume that either $k\geq 2$, or $k=1$ and both $a,b\neq 1$. Then $M$ is smoothly minimal, symplectic and irrational and thus satisfies property $(\ast)$. We have $c_1^2(M)=0$ and $\chi(M)=12k$. Therefore
\begin{equation*}
n\leq 3k\left(1+\frac{1}{2p-1}\right).
\end{equation*}
This rules out some of the possible $\mathbb{Z}_3$-actions on elliptic surfaces in \cite{LiH}.
\end{ex}

\section{The case $n<0$: non-existence of actions}

Since the integer $n$ has to be non-negative if an action exists, we get:
\begin{prop}\label{prop nonexistence} Let the group $\mathbb{Z}_p$ act homologically trivially on a simply-connected 4-manifold $M$. Suppose that all spheres $S$ in the fixed point set of the action satisfy an a priori bound $[S]^2\leq s< 0$ for some integer $s$. Then
\begin{equation*}
p\chi(M)\geq c_1^2(M).
\end{equation*} 
\end{prop}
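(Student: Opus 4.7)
The plan is to read the desired inequality directly off the upper bound on $n$ supplied by Theorem \ref{main thm}. Under exactly the hypotheses of the present proposition, that theorem gives
\[
n \leq \frac{p\chi(M) - c_1^2(M)}{p(2-s) - (4+s)}.
\]
Since the fixed point set of an actual action has a non-negative number of spherical components, $n$ is a non-negative integer; so once I verify that the denominator on the right is positive, the numerator must be non-negative as well, which is precisely the inequality $p\chi(M) \geq c_1^2(M)$.

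For the positivity of the denominator I use only that $s$ is an integer with $s \leq -1$ and that $p \geq 2$ is prime. Then $2-s \geq 3$ and $4+s \leq 3$, so
\[
p(2-s) - (4+s) \geq 3p - 3 = 3(p-1) > 0.
\]
Combining this with $n \geq 0$ forces $p\chi(M) - c_1^2(M) \geq 0$, as claimed.

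There is essentially no substantive obstacle here: the statement is a one-line corollary of Theorem \ref{main thm}, and the only thing that requires checking is the sign of the denominator, handled by the elementary estimate above. The point of recording the proposition is interpretive rather than technical, namely to package the non-existence consequence of Theorem \ref{main thm} in a form that is easy to apply to specific 4-manifolds in the subsequent sections of the paper.
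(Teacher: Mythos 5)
Your proof is correct and is exactly the paper's argument: the proposition is stated there as an immediate consequence of Theorem \ref{main thm} together with $n\geq 0$, the positivity of the denominator having already been noted in the proof of that theorem. Your explicit check that $p(2-s)-(4+s)\geq 3(p-1)>0$ is a welcome elaboration but changes nothing of substance.
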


\begin{cor}\label{cor p23 non} Let the group $\mathbb{Z}_p$ act homologically trivially and smoothly on a simply-connected, smooth 4-manifold $M$ that satisfies property $(\ast)$. If $p=2$, then $\mathrm{sign}(M)\leq 0$. If $p=3$, then $c_1^2(M)\leq 3\chi(M)$.
\end{cor}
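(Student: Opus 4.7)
The plan is to reduce this to a direct application of Proposition \ref{prop nonexistence} with the self-intersection bound $s=-1$, and then simply insert $p=2$ and $p=3$.

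First I would verify that the hypotheses of Proposition \ref{prop nonexistence} are met with $s=-1$. Since the action is smooth, every sphere component of the fixed point set $F$ is smoothly embedded in $M$ (as recalled in the proof of Corollary \ref{main cor to thm}). By Proposition \ref{main prop}, each such sphere represents a non-zero class in $H_2(M;\mathbb{Z})$, hence also in $H_2(M;\mathbb{Q})$. Property $(\ast)$ then forces $[S]^2<0$, i.e.\ $[S]^2\leq -1$ for every sphere $S\subset F$. Thus the a priori bound of Proposition \ref{prop nonexistence} holds with $s=-1$, and we obtain
\begin{equation*}
p\chi(M)\geq c_1^2(M).
\end{equation*}

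Substituting $c_1^2(M)=2\chi(M)+3\mathrm{sign}(M)$ gives
\begin{equation*}
(p-2)\chi(M)\geq 3\mathrm{sign}(M).
\end{equation*}
For $p=2$ this reads $0\geq 3\mathrm{sign}(M)$, hence $\mathrm{sign}(M)\leq 0$. For $p=3$ the inequality $p\chi(M)\geq c_1^2(M)$ reads directly as $3\chi(M)\geq c_1^2(M)$, which is the claim.

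There is essentially no obstacle here once Proposition \ref{prop nonexistence} is in hand; the only point to be careful about is that property $(\ast)$ together with Proposition \ref{main prop} legitimately supplies the bound $s=-1$ (rather than a weaker $s=0$), which is what allows the denominator $3(p-1)$ in the estimate from Theorem \ref{main thm} to be strictly positive and force non-negativity of the numerator.
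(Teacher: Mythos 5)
Your proof is correct and is exactly the argument the paper intends: Corollary \ref{cor p23 non} is a direct specialization of Proposition \ref{prop nonexistence} with $s=-1$ (justified via Proposition \ref{main prop}, smoothness of the fixed spheres, and property $(\ast)$), followed by substituting $c_1^2(M)=2\chi(M)+3\mathrm{sign}(M)$ for $p=2$ and reading off the inequality for $p=3$. No discrepancies with the paper's approach.
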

\begin{rem} Ruberman \cite{Rub} has shown that if $\mathbb{Z}_2$ acts homologically trivially and {\em locally linearly} on a simply-connected {\em spin} 4-manifold, then $\mathrm{sign}(M)=0$. The first part of Corollary \ref{cor p23 non} is a partial extension of this result to {\em smooth} $\mathbb{Z}_2$-actions on {\em non-spin} 4-manifolds. Regarding the second statement, it is not known if there exist simply-connected, smooth 4-manifolds with non-trivial Seiberg-Witten invariants and $c_1^2(M)> 3\chi(M)$ (for more on this question see \cite[Section 10.3]{GS}). Note that any simply-connected 4-manifold satisfies a priori $c_1^2(M)< 5\chi(M)$.
\end{rem}
A non-singular, odd, integral, bilinear form $Q$ on a finitely generated free abelian group $V$ is said to have {\em characteristic signature} if there exists an indivisible characteristic element $v\in V$ such that $v\cdot v=\mathrm{sign}(Q)$. The intersection forms of {\em smooth}, simply-connected, non-spin 4-manifolds are direct sums of copies of the forms $(+1)$ and $(-1)$ (this is clear in the indefinite case and follows in the definite case by Donaldson's theorem \cite{Don}) and hence are always characteristic. The next theorem of Edmonds then follows from \cite[Corollary 11]{Ed2}:
\begin{thm}[Edmonds]\label{Edmonds exist loc lin invol} Every smooth, simply-connected, non-spin 4-manifold $M$ admits a homologically trivial, locally linear involution whose fixed point set consists of a single sphere $S$ with $[S]^2=\mathrm{sign}(M)$ and a collection of isolated points.
\end{thm}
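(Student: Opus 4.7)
The plan is to reduce the statement to Edmonds' \cite[Corollary 11]{Ed2}, which produces exactly such an involution for any smooth simply-connected 4-manifold whose intersection form has characteristic signature in the sense defined just before the theorem. So the whole task is to verify this algebraic condition for the intersection form $Q_M$ of an arbitrary smooth, simply-connected, non-spin 4-manifold.

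First I would recall the shape of $Q_M$. Since $M$ is simply-connected and non-spin, $Q_M$ is an odd unimodular form over $\mathbb{Z}$. In the indefinite case, the classification of odd unimodular forms forces $Q_M\cong a\langle 1\rangle\oplus b\langle -1\rangle$ with $a=b_2^+(M)$ and $b=b_2^-(M)$, while in the definite case the same conclusion follows from Donaldson's theorem \cite{Don}. Either way I may fix an orthogonal basis $e_1,\dots,e_{a+b}$ of $H^2(M;\mathbb{Z})$ in which $Q_M$ is diagonal with entries $\pm 1$, and $a-b=\mathrm{sign}(M)$.

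Next I would exhibit the required characteristic element. Take $v=e_1+\cdots+e_{a+b}$. For any $x=\sum x_ie_i$ one has $x\cdot v=\sum x_i$, while $x\cdot x=\sum(\pm x_i^2)\equiv\sum x_i\pmod 2$, so $v$ is characteristic. Since $v\cdot e_1=\pm1$, it is indivisible, and $v\cdot v=a-b=\mathrm{sign}(M)$. Hence $Q_M$ has characteristic signature.

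Finally, plugging this into \cite[Corollary 11]{Ed2} yields a locally linear, homologically trivial involution on $M$ whose fixed-point set consists of a single embedded 2-sphere $S$ with $[S]^2=\mathrm{sign}(M)$ together with a collection of isolated points, which is exactly what is claimed. The main (and really only) obstacle is external, namely the invocation of the equivariant construction in \cite{Ed2}; the algebraic reduction above is routine once the classification of odd unimodular forms and Donaldson's theorem are available.
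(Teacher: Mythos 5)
Your proposal is correct and follows essentially the same route as the paper: the paper likewise observes that the intersection form of a smooth, simply-connected, non-spin 4-manifold is diagonalizable (by the classification of odd indefinite unimodular forms, or by Donaldson's theorem in the definite case), hence has characteristic signature, and then invokes \cite[Corollary 11]{Ed2}. Your explicit verification that $v=e_1+\cdots+e_{a+b}$ is an indivisible characteristic element with $v\cdot v=\mathrm{sign}(M)$ just fills in a detail the paper leaves implicit (modulo the harmless slip that $x\cdot v=\sum\pm x_i$ rather than $\sum x_i$, which does not affect the mod $2$ argument).
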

By contrast, the following corollary is implied by Corollary \ref{cor p23 non}:
\begin{cor}\label{cor edmonds smooth invol} Let $M$ be a smooth, simply-connected 4-manifold $M$ that satisfies property $(\ast)$ and has positive signature. Then $M$ does not admit a homologically trivial, smooth involution.
\end{cor}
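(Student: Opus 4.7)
The approach is to derive this corollary as a direct contrapositive of the $p=2$ case of Corollary \ref{cor p23 non}, which has already been established. All the real work has gone into Theorem \ref{main thm}, Proposition \ref{prop nonexistence}, and Corollary \ref{cor p23 non}; what remains here is simply to verify that the hypotheses line up.

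Concretely, the plan is as follows. Suppose for contradiction that $\tau\colon M\to M$ is a homologically trivial, smooth involution on $M$. Then $\mathbb{Z}_2$ acts homologically trivially and smoothly on the simply-connected smooth 4-manifold $M$, which by hypothesis satisfies property $(\ast)$. This is precisely the setup of Corollary \ref{cor p23 non} with $p=2$, which yields $\mathrm{sign}(M)\leq 0$, contradicting the standing assumption $\mathrm{sign}(M)>0$. Hence no such involution can exist.

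For the reader it is worth recalling briefly why the input to Proposition \ref{prop nonexistence} is available. By Proposition \ref{main prop}, every sphere $S$ in the fixed point set represents a non-zero class in $H_2(M;\mathbb{Z})$, and such a sphere is smoothly embedded since the action is smooth. Property $(\ast)$ then forces $[S]^2\leq -1$, so one may take $s=-1$ in Proposition \ref{prop nonexistence}. Specialising its conclusion $p\chi(M)\geq c_1^2(M)$ to $p=2$ and substituting $c_1^2(M)=2\chi(M)+3\mathrm{sign}(M)$ gives $\mathrm{sign}(M)\leq 0$, which is exactly the $p=2$ statement of Corollary \ref{cor p23 non} used above.

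There is no genuine obstacle in this deduction; the only subtlety is the implicit use of property $(\ast)$ to upgrade the a priori bound $[S]^2<0$ on spheres in the fixed set to the strict inequality $[S]^2\leq -1$ needed as the integer input $s$ of Proposition \ref{prop nonexistence}. The statement does not require a non-spin hypothesis, even though the companion discussion in the introduction contrasts the result with Edmonds' Theorem \ref{Edmonds exist loc lin invol} in the non-spin setting.
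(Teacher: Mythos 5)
Your proposal is correct and matches the paper exactly: the paper derives this corollary as an immediate consequence of the $p=2$ case of Corollary \ref{cor p23 non}, which gives $\mathrm{sign}(M)\leq 0$ and thus contradicts the positivity of the signature. Your additional recap of why $s=-1$ is a valid input to Proposition \ref{prop nonexistence} is accurate and consistent with the paper's earlier arguments.
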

This corollary is relevant only if $M$ is non-spin because of Ruberman's theorem.
\begin{ex}Let $M$ be a simply-connected, complex algebraic surface of general type and positive signature with $b_2^+(M)>1$ (see e.g.~\cite{PPX} and the references therein for the construction of such surfaces). Then $M$ satisfies property $(\ast)$ by Proposition \ref{prop sphere adjunction} and the non-triviality of the Seiberg-Witten invariants for surfaces of general type \cite{Wi}. Hence $M$ does not admit a homologically trivial, smooth $\mathbb{Z}_2$-action. However, if $M$ is non-spin (for example, if $M$ is a blow-up of a spin surface of general type), then it admits a homologically trivial, locally linear $\mathbb{Z}_2$-action by Theorem \ref{Edmonds exist loc lin invol} of Edmonds.
\end{ex}
We can also prove the following:
\begin{cor}\label{sign=-1}
Let $M$ be a simply-connected, smooth, minimal 4-manifold with $\mathrm{sign}(M)=-1$ that satisfies property $(\ast)$. Then $M$ does not admit a homologically trivial, smooth involution.
\end{cor}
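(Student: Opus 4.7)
The plan is to combine the minimal version of Corollary \ref{main cor to thm} with a direct application of the $G$-signature theorem in the pseudofree case for $p=2$. Suppose, for contradiction, that a homologically trivial smooth involution exists on $M$.

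First I apply the minimal bound from Corollary \ref{main cor to thm} with $p=2$. Substituting $c_1^2(M) = 2\chi(M) + 3\mathrm{sign}(M) = 2\chi(M) - 3$ into
\begin{equation*}
n \leq \frac{p\chi(M) - c_1^2(M)}{2(2p-1)}
\end{equation*}
yields $n \leq 3/6 = 1/2$, and since $n$ is a non-negative integer, $n = 0$. Hence the fixed point set $F$ consists of isolated points only, i.e.\ the action is pseudofree.

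With no spherical components, the $G$-signature formula stated just above Lemma \ref{main lem} reduces for $p = 2$ to
\begin{equation*}
\mathrm{sign}(M) = \sum_{x \in P} \mathrm{def}_x,
\end{equation*}
where each $\mathrm{def}_x$ equals $\mathrm{def}(2;q,1)$ for some $q$ coprime to $2$. The only possibility is $q = 1$, and Lemma \ref{main lem} gives $|\mathrm{def}(2;1,1)| \leq \tfrac{1}{3}(2-1)(2-2) = 0$, so every local contribution vanishes. Therefore $\mathrm{sign}(M) = 0$, contradicting the assumption $\mathrm{sign}(M) = -1$.

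The argument is essentially forced once one checks that the minimal bound kills the spherical components; the only subtlety is that the pseudofree reduction of the $G$-signature theorem for $p=2$ forces vanishing signature, so no obstacle is expected beyond keeping the arithmetic straight.
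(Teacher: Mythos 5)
Your proof is correct. Both steps check out: with $p=2$ and $c_1^2(M)=2\chi(M)-3$ the minimal bound from Corollary \ref{main cor to thm} gives $n\leq 3/6$, hence $n=0$; and for a pseudofree involution the $G$-signature theorem together with $\mathrm{def}(2;1,1)=0$ forces $\mathrm{sign}(M)=0$, contradicting $\mathrm{sign}(M)=-1$.

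The paper's own proof is shorter and does not pass through the quantitative corollary at all: it simply observes that for any homologically trivial smooth involution the $G$-signature theorem reduces to $\mathrm{sign}(M)=\sum_{i=1}^n [S_i]^2$ (since all isolated-point defects vanish for $p=2$), and the right-hand side is either $0$ (if $n=0$) or at most $-2$ (since minimality plus property $(\ast)$ give $[S_i]^2\leq -2$), so it can never equal $-1$. This one-line parity-of-possibilities argument handles all values of $n$ simultaneously, whereas you first eliminate the spheres and then derive the contradiction from the pseudofree case. Your detour is harmless --- Corollary \ref{main cor to thm} is itself a consequence of the same $G$-signature identity and Lemma \ref{main lem} --- and it has the small expository benefit of exhibiting the result as a boundary instance of the $n<1$ phenomenon; the paper's version has the benefit of making transparent exactly which arithmetic fact ($-1$ lies strictly between $0$ and $-2$) is responsible for the non-existence.
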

\begin{proof}
Since $\mathrm{def}_x=0$ for isolated fixed points of involutions, the $G$-signature theorem implies for such an action
\begin{equation*}
\mathrm{sign}(M)=\sum_{i=1}^n[S_i]^2.
\end{equation*}
This cannot be satisfied, because $\mathrm{sign}(M)=-1$ and $[S_i]^2\leq -2$ under our assumptions.
\end{proof}
\begin{rem}
Note that such a manifold is always non-spin according to Rohlin's theorem. The proof of Corollary \ref{sign=-1} also gives a further explanation for the result in Corollary \ref{cor edmonds smooth invol}.
\end{rem}

\section{The case $0\leq n<1$: action is pseudofree}

We can also study the case $0\leq n<1$. This will elucidate the situation close to or on the boundary of the allowed regions given by Proposition \ref{prop nonexistence} and Corollary \ref{cor p23 non}.
\begin{prop}\label{prop pseudofree} Let the group $\mathbb{Z}_p$ act homologically trivially on a simply-connected 4-manifold $M$. Suppose that all spheres $S$ in the fixed point set of the action satisfy an a priori bound $[S]^2\leq s< 0$ for some integer $s$ and that $M$ satisfies
\begin{equation*}
p\chi(M)-c_1^2(M)<p(2-s)-(4+s).
\end{equation*}
Then $n=0$, hence the fixed point set consists only of isolated points, i.e.~the action is pseudofree.
\end{prop}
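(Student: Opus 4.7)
The plan is to deduce this directly from Theorem \ref{main thm}, which already packages all the heavy lifting (the $G$-signature theorem together with the bound on signature defects in Lemma \ref{main lem}). The hypothesis of Proposition \ref{prop pseudofree} is precisely designed to feed into the upper bound supplied by that theorem, so the proof should be a short argument combining an inequality with an integrality observation.

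Concretely, I would first apply Theorem \ref{main thm} verbatim to the action and manifold under consideration, using the same integer $s < 0$ as the a priori bound on $[S]^2$. This yields
\begin{equation*}
n \leq \frac{p\chi(M)-c_1^2(M)}{p(2-s)-(4+s)}.
\end{equation*}
Next, I would note that the denominator on the right is strictly positive: since $s \leq -1$ we have $2-s \geq 3$ and $-(4+s) \geq -3$, so for $p \geq 2$ one gets $p(2-s)-(4+s) \geq 6 - 3 = 3 > 0$. (This positivity was already used implicitly in Theorem \ref{main thm}.) Consequently the hypothesis
\begin{equation*}
p\chi(M)-c_1^2(M) < p(2-s)-(4+s)
\end{equation*}
can be divided through to give $n < 1$.

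Finally, I would invoke the fact that $n$, being the number of sphere components of the fixed point set, is a non-negative integer. Combined with $n < 1$ this forces $n = 0$, so the fixed point set consists only of isolated points and the action is pseudofree. There is no serious obstacle here: once Theorem \ref{main thm} is available, the argument is essentially a rewriting of the hypothesis plus the integrality of $n$. The only small point worth stating cleanly is the positivity of the denominator, which is what allows the strict inequality to be preserved upon division.
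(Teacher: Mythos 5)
Your proposal is correct and is exactly the argument the paper intends (the paper leaves the proof of Proposition \ref{prop pseudofree} implicit, as an immediate consequence of Theorem \ref{main thm} together with the positivity of the denominator and the integrality of $n$). Your explicit verification that $p(2-s)-(4+s)\geq 3>0$ for $s\leq -1$ and $p\geq 2$ is a welcome touch.
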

The following is an application to involutions on 4-manifolds with $\mathrm{sign}(M)=0$:
\begin{cor}\label{cor Z2 sign 0 -1}Let the group $\mathbb{Z}_2$ act homologically trivially and smoothly on a simply-connected, smooth 4-manifold $M$ that satisfies property $(\ast)$. Assume that $\mathrm{sign}(M)=0$. Then the action is pseudofree. In particular, every smooth, homologically trivial involution on a simply-connected, smooth, spin 4-manifold that satisfies property $(\ast)$ is pseudofree.
\end{cor}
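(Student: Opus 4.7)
The plan is to deduce both statements from Proposition \ref{prop pseudofree}. Since the action is smooth and $M$ satisfies property $(\ast)$, every sphere in the fixed point set (which is smoothly embedded by \cite[p.~309]{Br} and represents a non-zero homology class by Proposition \ref{main prop}) has self-intersection at most $-1$, so we may take the a priori bound $s = -1$.

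With $p = 2$ and $s = -1$ the right-hand side of the inequality in Proposition \ref{prop pseudofree} becomes
\begin{equation*}
p(2-s) - (4+s) = 2\cdot 3 - 3 = 3.
\end{equation*}
Using $c_1^2(M) = 2\chi(M) + 3\,\mathrm{sign}(M)$ and the hypothesis $\mathrm{sign}(M) = 0$, the left-hand side simplifies to
\begin{equation*}
p\chi(M) - c_1^2(M) = 2\chi(M) - 2\chi(M) - 3\,\mathrm{sign}(M) = 0.
\end{equation*}
Since $0 < 3$, Proposition \ref{prop pseudofree} yields $n = 0$, so the fixed point set consists only of isolated points, i.e.~the action is pseudofree.

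For the final sentence, suppose $M$ is in addition spin. By Ruberman's theorem (quoted in the remark following Corollary \ref{cor p23 non}), any smooth, homologically trivial $\mathbb{Z}_2$-action on a simply-connected spin 4-manifold forces $\mathrm{sign}(M) = 0$. The hypothesis on the signature is therefore automatic, and the first part of the corollary applies to conclude that the action is pseudofree. No obstacle arises: the argument is a direct numerical substitution into the previously established proposition, combined with the cited signature vanishing theorem.
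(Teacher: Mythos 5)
Your proof is correct and follows essentially the same route as the paper: substituting $s=-1$ (justified by smoothness and property $(\ast)$) into Proposition \ref{prop pseudofree} to obtain the inequality $0<3$, and invoking Ruberman's theorem for the spin case. The only difference is that you spell out the justification for $s=-1$ and the arithmetic in slightly more detail than the paper does.
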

\begin{proof}We have $c_1^2(M)=2\chi(M)+3\mathrm{sign}(M)$. We can take $s=-1$ in Proposition \ref{prop pseudofree} and the inequality is $0<3$, which is true. The second part follows from Ruberman's theorem \cite{Rub} since under these assumptions $\mathrm{sign}(M)=0$.
\end{proof}
\begin{rem} Atiyah-Bott \cite[Proposition 8.46]{AtBott} have shown that all components of the fixed point set have the same dimension, so that the fixed point set consists {\em either} of isolated fixed points {\em or} of a collection of embedded surfaces, if $\mathbb{Z}_2$ acts smoothly and orientation-preservingly on a simply-connected spin 4-manifold (there are generalizations to the locally linear and general case by Edmonds \cite[Corollary 3.3]{Ed} and Ruberman \cite{Rub}). Under our additional assumptions that the involution is homologically trivial and $M$ satisfies property $(\ast)$ the second case of a fixed point set of dimension 2 does not occur.
\end{rem}
We can prove a similar statement for $\mathbb{Z}_3$-actions on 4-manifolds close to or on the Bogomolov-Miyaoka-Yau line $c_1^2(M)=3\chi(M)$:
\begin{cor}\label{cor Z3 pseudofree} Let the group $\mathbb{Z}_3$ act homologically trivially and smoothly on a simply-connected, smooth 4-manifold $M$ that satisfies property $(\ast)$. Assume that either $c_1^2(M)=3\chi(M)-l$ with $0\leq l\leq 4$, or $M$ is minimal and $c_1^2(M)=3\chi(M)-l$ with $0\leq l\leq 8$. Then the action is pseudofree. 
\end{cor}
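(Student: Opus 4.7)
The plan is to apply Proposition \ref{prop pseudofree} directly with $p=3$, choosing the a priori bound $s$ on sphere self-intersections according to whether minimality is assumed.

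First I would verify the bound on sphere self-intersections. Since the action is smooth, every $2$-sphere in the fixed point set $F$ is smoothly embedded by \cite[p.~309]{Br}, and by Proposition \ref{main prop} each such sphere represents a non-zero class in $H_2(M;\mathbb{Z})$. Because $M$ is simply-connected, $H_2(M;\mathbb{Z})$ is torsion-free, so the class is also non-zero in $H_2(M;\mathbb{Q})$, and property $(\ast)$ forces $[S]^2\leq -1$ for every sphere $S\subset F$. Under the additional minimality hypothesis, no smoothly embedded $(-1)$-spheres exist in $M$, and the bound strengthens to $[S]^2\leq -2$.

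For the first case I would set $s=-1$ in Proposition \ref{prop pseudofree}. A quick substitution gives $p(2-s)-(4+s) = 3\cdot 3 - 3 = 6$, so the hypothesis $p\chi(M)-c_1^2(M) < p(2-s)-(4+s)$ reduces, via $c_1^2(M) = 3\chi(M)-l$, to $l<6$, which holds throughout $0\leq l\leq 4$. For the minimal case I would set $s=-2$; then $p(2-s)-(4+s) = 3\cdot 4 - 2 = 10$, so the required condition becomes $l<10$, which holds throughout $0\leq l\leq 8$. In both cases Proposition \ref{prop pseudofree} yields $n=0$, meaning the fixed point set contains no $2$-spheres and the action is pseudofree.

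There is no substantive obstacle: the corollary is a direct instantiation of Proposition \ref{prop pseudofree} for $p=3$, the only task being to check that the prescribed ranges of $l$ sit inside the strict-inequality regions produced by the proposition for the two allowable values of $s$.
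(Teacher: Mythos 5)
Your proposal is correct and is essentially identical to the paper's argument: the paper likewise applies Proposition \ref{prop pseudofree} with $p=3$ and $s=-1$ (resp.\ $s=-2$ in the minimal case), noting that the inequality reduces to $l<6$ (resp.\ $l<10$). Your verification of the self-intersection bounds via property $(\ast)$ and minimality, and the arithmetic giving the thresholds $6$ and $10$, all match.
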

\begin{proof} The proof is similar to the proof of Corollary \ref{cor Z2 sign 0 -1}. For Proposition \ref{prop pseudofree} to work, $l$ has to be less than 6 in the first case and less than 10 in the second case.
\end{proof}
\begin{rem} Note that
\begin{equation*}
l=3\chi(M)-c_1^2(M)=\chi(M)-3\mathrm{sign}(M)=2-2b_2^+(M)+4b_2^-(M)
\end{equation*}
is always an {\em even} number. If $b_1(M)=0$, the Seiberg-Witten invariants can be non-zero or $M$ can have a symplectic form only if $b_2^+(M)$ is odd. Then $l$ is divisible by $4$. Hence if we want to apply Proposition \ref{prop sphere adjunction} and Proposition \ref{prop sphere adjunction b+=1}, then $l\in\{0,4\}$ in the first case and $l\in\{0,4,8\}$ in the second case of Corollary \ref{cor Z3 pseudofree}.
\end{rem}
\begin{ex}\label{ex Liu} Let $M$ be a smooth, minimal 4-manifold homeomorphic, but not diffeomorphic to the manifold $\mathbb{CP}^2\#2{\overline{\mathbb{CP}}{}^{2}}$, cf.~\cite{AkPark}. Suppose that $M$ admits a symplectic form $\omega$ (such an example for $M$ is constructed in that paper). Then $M$ satisfies property $(\ast)$ according to Proposition \ref{prop sphere adjunction b+=1}. Hence there does not exist a smooth, homologically trivial involution on $M$ and every smooth, homologically trivial $\mathbb{Z}_3$-action is pseudofree.
\end{ex}

\section{Actions on exotic $S^2\times S^2$ and $\mathbb{CP}^2\#{\overline{\mathbb{CP}}{}^{2}}$}\label{section exotic S2S2 CP2CP2bar}

\begin{lem} Let $\mathbb{Z}_p$, with $p\geq 3$ prime, act on $M$, where $M$ is a 4-manifold homeomorphic to $S^2\times S^2$ or $\mathbb{CP}^2\#{\overline{\mathbb{CP}}{}^{2}}$. Then the action is homologically trivial. 
\end{lem}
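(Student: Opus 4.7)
The plan splits on the prime $p$. Both manifolds have $b_2(M) = 2$, so Lemma \ref{lem p b_2} immediately handles every prime $p > b_2(M) + 1 = 3$, that is, all $p \geq 5$. The entire remaining work is the case $p = 3$.

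For $p = 3$ I would use the integral rank decomposition of the induced action on $H^2(M;\mathbb{Z})$:
\begin{equation*}
b_2(M) \;=\; t + c(p-1) + rp \;=\; t + 2c + 3r \;=\; 2,
\end{equation*}
whose only non-negative integer solutions are $(t,c,r) = (2,0,0)$, which is exactly the trivial homological action, and $(t,c,r) = (0,1,0)$, a single copy of the cyclotomic representation. So everything reduces to ruling out the second possibility.

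In that case the generator would act on $H^2(M;\mathbb{Z}) \cong \mathbb{Z}^2$ by an integer matrix $A \in GL_2(\mathbb{Z})$ whose minimal---and hence characteristic---polynomial is $\Phi_3(x) = x^2 + x + 1$, so $\mathrm{tr}(A) = -1$ and $\det(A) = 1$; moreover $A$ must preserve the intersection form $Q$ of $M$. I would eliminate such an $A$ by a direct calculation of the full isometry group $O(Q;\mathbb{Z})$. For $Q = \bigl(\begin{smallmatrix} 0 & 1 \\ 1 & 0 \end{smallmatrix}\bigr)$ (the case $M \cong S^2 \times S^2$), writing $A = \bigl(\begin{smallmatrix} a & b \\ c & d \end{smallmatrix}\bigr)$ and expanding $A^T Q A = Q$ gives $ac = bd = 0$ and $ad + bc = 1$, whose only integral solutions are $A \in \{\pm I,\ \pm \bigl(\begin{smallmatrix} 0 & 1 \\ 1 & 0 \end{smallmatrix}\bigr)\}$. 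For $Q = \langle 1 \rangle \oplus \langle -1 \rangle$ (the case $M \cong \mathbb{CP}^2 \# \overline{\mathbb{CP}}{}^2$) the equations $a^2 - c^2 = 1$, $b^2 - d^2 = -1$, $ab = cd$ force $A = \mathrm{diag}(\pm 1, \pm 1)$. In each group every element has trace in $\{-2,0,2\}$, never $-1$, contradicting $\mathrm{tr}(A) = -1$.

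There is no real obstacle: the argument amounts to isolating $p = 3$ via the small second Betti number and then inspecting two small indefinite rank-two isometry groups. It is worth noting that the hypothesis $p \geq 3$ cannot be dropped, since $-I$ is a genuine order-two isometry of both intersection forms, so for $p = 2$ the approach legitimately fails.
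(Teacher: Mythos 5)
Your argument is correct, and for $p\geq 5$ it coincides exactly with the paper's, which also invokes Lemma \ref{lem p b_2} together with $b_2(M)=2$. The difference is entirely in the case $p=3$: the paper disposes of it by citing Proposition 5.8 of Klemm's thesis \cite{Kl} and gives no argument of its own, whereas you supply a self-contained proof by enumerating the solutions of $t+2c+3r=2$ and then ruling out the cyclotomic summand through an explicit computation of the integral isometry groups of $H\cong\bigl(\begin{smallmatrix}0&1\\1&0\end{smallmatrix}\bigr)$ and $\langle 1\rangle\oplus\langle -1\rangle$. Both computations check out (each isometry group is a Klein four-group whose elements have trace $\pm 2$ or $0$, never $-1$), and in fact the trace argument alone already suffices: any order-$3$ automorphism of $\mathbb{Z}^2$ has characteristic polynomial $x^2+x+1$ and hence trace $-1$, so the detour through the $(t,c,r)$-decomposition, while harmless, is not needed. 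What your version buys is transparency and independence from a reference that is not easy to access; what the paper's version buys is brevity. Your closing remark that $-I$ obstructs the statement for $p=2$ is also correct and explains why the hypothesis $p\geq 3$ is sharp.
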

\begin{proof}
This follows as in \cite[Proposition 5.8]{Kl} (it follows from Lemma \ref{lem p b_2} in all cases except $p=3$).
\end{proof}
\begin{cor}\label{cor S2S2} Let $\mathbb{Z}_p$ act smoothly on $M$, where $M$ is a smooth, minimal 4-manifold homeomorphic, but not diffeomorphic to $S^2\times S^2$ or $\mathbb{CP}^2\#{\overline{\mathbb{CP}}{}^{2}}$ and satisfying property $(\ast)$. If $p=2$, assume in addition that the action is homologically trivial. Then the action is pseudofree.
\end{cor}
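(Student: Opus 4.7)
The plan is to apply Proposition \ref{prop pseudofree} (equivalently the minimal case of Corollary \ref{main cor to thm}) to $M$ with the sphere self-intersection bound $s=-2$, and check that the numerical hypothesis forcing $n=0$ holds trivially.

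First I would reduce to the homologically trivial case. For $p=2$ this is part of the hypothesis, while for $p\geq 3$ the preceding lemma asserts that every $\mathbb{Z}_p$-action on a 4-manifold homeomorphic to $S^2\times S^2$ or $\mathbb{CP}^2\#\overline{\mathbb{CP}}{}^{2}$ is homologically trivial. Thus in either case the results of Section \ref{sect main cors} are available. Next I would record the topological invariants: for both homeomorphism types one has $\chi(M)=4$, $\mathrm{sign}(M)=0$, and hence $c_1^2(M)=2\chi(M)+3\mathrm{sign}(M)=8$.

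The key step is to bound the self-intersection of the spherical fixed-point components. By \cite[p.~309]{Br} each such sphere is smoothly embedded, and by Proposition \ref{main prop} each represents a non-zero class in $H_2(M;\mathbb{Z})$ after a choice of orientation. Property $(\ast)$ then gives $[S]^2<0$, and minimality of $M$ upgrades this to $[S]^2\leq -2$. So we may take $s=-2$ in Proposition \ref{prop pseudofree}.

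Finally I would substitute into the pseudofreeness criterion of Proposition \ref{prop pseudofree}: the required inequality
\begin{equation*}
p\chi(M)-c_1^2(M)<p(2-s)-(4+s)
\end{equation*}
becomes $4p-8<4p-2$, which holds for every prime $p\geq 2$. Hence $n=0$ and the action is pseudofree. There is no real obstacle: all of the work has been done in establishing Proposition \ref{prop pseudofree} and the homological triviality lemma, and the conclusion reduces to a trivial arithmetic verification once the invariants of $M$ are plugged in.
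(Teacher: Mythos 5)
Your proposal is correct and follows exactly the paper's own argument: reduce to the homologically trivial case via the preceding lemma (or by hypothesis when $p=2$), take $s=-2$ from property $(\ast)$ plus minimality, and check that Proposition \ref{prop pseudofree} yields the inequality $4p-8<4p-2$. The extra details you supply (smooth embeddedness of the fixed spheres and the computation $c_1^2(M)=8$) are exactly the steps the paper leaves implicit.
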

\begin{proof} We have $\chi(M)=4$ and $c_1^2(M)=8$. Hence the inequality in Proposition \ref{prop pseudofree} with $s=-2$ is
\begin{equation*}
4p-8< 4p-2.
\end{equation*}
Since this is true, the claim follows.
\end{proof}
Note that every smooth 4-manifold homeomorphic to $S^2\times S^2$ is minimal because its intersection form is even. It is not known if there exist exotic 4-manifolds homeomorphic, but not diffeomorphic to $S^2\times S^2$ or $\mathbb{CP}^2\#{\overline{\mathbb{CP}}{}^{2}}$. The smallest (in terms of Euler characteristic) known, simply-connected 4-manifold that admits exotic copies is $\mathbb{CP}^2\#2{\overline{\mathbb{CP}}{}^{2}}$, mentioned above in Example \ref{ex Liu}. However, if the trend for $\mathbb{CP}^2\#n{\overline{\mathbb{CP}}{}^{2}}$ with $n\geq 2$ generalizes to even smaller 4-manifolds, it is quite likely that exotic copies of $S^2\times S^2$ or $\mathbb{CP}^2\#{\overline{\mathbb{CP}}{}^{2}}$ exist, at least some of which could be symplectic, so that Corollary \ref{cor S2S2} applies to them.
\begin{rem} All statements in this paper remain true (except possibly Theorem \ref{Edmonds exist loc lin invol} of Edmonds) if the assumption that $M$ is simply-connected is replaced by $H_1(M;\mathbb{Z})=0$. This follows from \cite[Corollary 3.3, Proposition 3.5]{McC}, since in this situation Proposition \ref{main prop} above remains true. The results of Section \ref{section exotic S2S2 CP2CP2bar} then apply to smooth 4-manifolds with the integral cohomology of $S^2\times S^2$ and $\mathbb{CP}^2\#{\overline{\mathbb{CP}}}$ (for example, the symplectic cohomology $S^2\times S^2$ constructed in \cite{Ak}).
\end{rem}
\bibliographystyle{amsplain}

\bigskip
\bigskip

\end{document}